\documentclass[12pt,a4paper]{article}

\arraycolsep=1.5pt
\usepackage{amsfonts,amssymb,amsthm}
\usepackage{amsbsy}
\usepackage{graphicx}
\usepackage{latexsym}
\usepackage{amsmath,enumerate,amsfonts,amssymb,color,graphicx,amsthm}

\usepackage{CJK,CJKnumb}

\theoremstyle{plain}
\newtheorem{theorem}{Theorem}[section]

\newtheorem{lemma}[theorem]{Lemma}
\newtheorem{prop}[theorem]{Proposition}
\theoremstyle{definition}

\theoremstyle{remark}
\newtheorem{remark}{Remark}[section]
\numberwithin{equation}{section}

\theoremstyle{example}

\newcounter{marnote}

\def\o{\overline}
\def\u{\underline}

\def\b{\backslash}

\setlength{\topmargin}{-0.4in} \setlength{\oddsidemargin}{0.25in}
\setlength{\evensidemargin}{-0.1in} \setlength{\textwidth}{16cm}
\setlength{\textheight}{24cm}

\begin{document}

\begin{center}{\Large Interior derivative estimates and Bernstein theorem for Hessian quotient equations}
\end{center}
\centerline{\large  Limei Dai \quad Jiguang Bao \quad Bo Wang }
\vspace{5mm}

\begin{minipage}{140mm}{\footnotesize {\small\bf Abstract:} {\small
In this paper, we obtain the interior derivative estimates of solutions for elliptic and parabolic Hessian quotient equations. Then we establish the Bernstein theorem for parabolic Hessian quotient equations, that is, any parabolically convex solution $u=u(x,t)\in C^{4,2}(\mathbb{R}^n\times (-\infty,0])$ for $-u_t\frac{S_n(D^2u)}{S_l(D^2u)}=1$ in $\mathbb{R}^n\times (-\infty,0]$ must be the form of $u=-mt+P(x)$ with $m>0$ being a constant and $P$ being a convex quadratic polynomial.}}

 {\small{\bf Keywords:} Hessian quotient equations; gradient estimate; Pogorelov type estimate; Bernstein theorem}

{\small{\bf 2020 MSC.} 35B08, 35K55}

\end{minipage}

\section{Introduction}


In this paper, we consider the derivative estimates and the Bernstein theorem for the Hessian quotient equations
\begin{equation}\label{hq-3}\frac{S_{k}(D^2u)}{S_{l}(D^2u)}=1\ \ \mbox{in}\ \ D,\end{equation}
and
\begin{equation} \label{hq1}-u_t\frac{S_k(D^2u)}{S_l(D^2u)}=1\
\ \mbox{in}\ \
\mathbb{R}^n\times(-\infty,0]
\end{equation}
where $0\leq l<k\leq
n,n\geq 2$, $S_l(D^2u)$
is the $l$th elementary symmetric function of the
eigenvalues $\lambda=(\lambda_1,\lambda_2,\cdots,\lambda_n)$ of Hessian matrix
$D^2u$, i.e.,
$$S_l(D^2u)=S_l(\lambda(D^2u))=\sum_{1\leq
i_1<\cdots<i_l\leq
n}\lambda_{i_1}\cdots\lambda_{i_l},l=1,2,\cdots,n,$$
and $D$ is a convex domain in $\mathbb{R}^n$.
For $l=0$, we set $S_0(D^2u)\equiv 1.$

J\"{o}rgens \cite{j} ($n=2$), Calabi \cite{c} ($n\leq 5$) and Pogorelov \cite{p} ($n\geq 2$) asserted that any convex classical solution of $\det D^2u=1$ in $\mathbb{R}^n$ must be a quadratic polynomial. Cheng and Yau \cite{cy} gave a simpler proof for the result. Caffarelli \cite{ca} extended the result to viscosity solutions. One can also refer to the related results \cite{jw, jx, tw1}. Later, the result was generalized to many equations, see \cite{BCGJ, bqtw, GH1, jw, lly, nt, tw1, xb} and the references therein. For instance, Bao, Chen, Guan and Ji \cite{BCGJ} obtained the result for elliptic Hessian quotient equations. Guti\'{e}rrez and Huang \cite{GH1} proved the Bernstein type theorem for parabolic Monge-Amp\`{e}re equations
\begin{equation}\label{pma}-u_t\det D^2u=1\ \mbox{in}\ \mathbb{R}^n\times(-\infty,0],\end{equation}
which is stated that if $u\in C^{4,2}(\mathbb{R}^n\times(-\infty,0])$ is a parabolically convex solution of \eqref{pma} such that for some positive constants $m_1,m_2$, $-m_2\leq u_t(x,t)\leq -m_1$, then $u$ must be the form $u(x,t)=-mt+P(x)$ with $m>0$ and $P$ being a convex quadratic polynomial. Xiong and Bao \cite{xb} obtained the Bernstein type theorem for parabolic Monge-Amp\`{e}re equations $u_t=\rho(\log \det D^2u)\ \mbox{in}\ \mathbb{R}^n\times(-\infty,0]$ with $\rho\in C^2(\mathbb{R})$. Nakamori and Takimoto \cite{nt} obtained the Bernstein type theorem of parabolically convex solutions for parabolic Hessian equations $-u_tS_k(D^2u)=1\ \mbox{in}\ \mathbb{R}^n\times(-\infty,0].$ Recently, He, Sheng and Xiang \cite{hsx} and Bao, Qiang, Tang and Wang \cite{bqtw} respectively proved the Bernstein type theorem of parabolic Hessian equations for parabolically $(k+1)-$convex solutions and parabolically $k-$convex solutions.

In this paper, we first obtain the Pogorelov estimate for elliptic Hessian quotient equation. The interior gradient estimate of elliptic Hessian quotient equations can be referred to \cite{chen}.

\begin{theorem}\label{thm-hqe} Let $k=n$ and $D$ be a bounded convex domain in $\mathbb{R}^n.$ Assume that $u\in C^{4}(\o{D})$ is a convex solution to \eqref{hq-3} with
\begin{equation}\label{hq-4}u=0\ \ \mbox{on}\ \ \partial D.\end{equation}
Then there exists a constant $C_0=C_0(n,||u||_{C^1(D)})$ such that
\begin{equation}\label{hq-2}\sup_{x\in D}|u(x)|^4|D^2u(x)|\leq C_0.\end{equation}
\end{theorem}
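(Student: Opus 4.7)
The strategy is the classical Pogorelov argument, adapted to the concave operator $F(D^2u) := \log S_n(D^2u) - \log S_l(D^2u)$, which vanishes identically on solutions of \eqref{hq-3}. Denote its linearization by $L := F^{ij}\partial_{ij}$ with $F^{ij} = \partial F/\partial u_{ij}$, and recall that on the positive cone $F$ is concave (because $(S_n/S_l)^{1/(n-l)}$ is concave there by Newton--Maclaurin, and $\log$ is concave increasing). I shall also use the Euler-type identity $F^{ii}u_{ii} = n-l$ and the positive-cone lower bounds $\sum_i F^{ii}\ge c_0(n,l)>0$ and $F^{11}\lambda_1\ge c_1(n,l)>0$.

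Introduce the auxiliary quantity
$$
\Phi(x,\xi) := 4\log(-u(x)) + \log u_{\xi\xi}(x) + \tfrac{\beta}{2}|Du(x)|^2,\quad x\in D,\; \xi\in\mathbb{S}^{n-1},
$$
with $\beta>0$ large, to be chosen in terms of $n$, $l$, and $\|u\|_{C^1(D)}$. Since $u<0$ in $D$ and $u|_{\partial D}=0$ by convexity and \eqref{hq-4}, $\Phi\to-\infty$ on $\partial D$ and so attains its maximum at some interior $(x_0,\xi_0)$. After a rotation I may assume $\xi_0 = e_1$ and $D^2u(x_0)$ is diagonal with $\lambda_1\ge\cdots\ge\lambda_n>0$; near $x_0$ the smooth function $\log u_{11}$ is still an upper barrier for $\log\lambda_{\max}$, so it is enough to bound $\Phi(x_0,e_1)$. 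At $x_0$ the first-order condition $\Phi_i = 0$ furnishes $u_{11i}/u_{11} = -4u_i/u - \beta u_k u_{ki}$, and $L\Phi(x_0)\le 0$ holds. Differentiating $F\equiv 0$ twice gives $F^{ii}u_{ii1} = 0$ and $F^{ii}u_{ii11} = -F^{ij,rs}u_{ij1}u_{rs1}\ge 0$ by concavity. Expanding $L\Phi$, using $F^{ii}u_{ii} = n-l$, and substituting the critical equation together with Cauchy--Schwarz leads to a schematic inequality
$$
\beta\, F^{ii}\lambda_i^2 \;\le\; \frac{C}{u^2}\,\Big(\sum_i F^{ii}\Big)\,|Du|^2 \;+\; \frac{C(n-l)}{|u|} \;+\; \text{lower order},
$$
where the constants depend only on $n$ and $l$.

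Finally, using $F^{ii}\lambda_i^2\ge F^{11}\lambda_1^2$ and the Newton--Maclaurin bound $F^{11}\lambda_1\ge c_1$, the left-hand side dominates $\beta c_1 \lambda_1$; choosing $\beta$ large enough one deduces
$$
(-u(x_0))^2\,\lambda_1(x_0) \le C(n,l,\|u\|_{C^1(D)}).
$$
Exponentiating $\Phi(x,\xi)\le\Phi(x_0,e_1)$ and absorbing the bounded gradient term then gives $(-u(x))^4\, u_{\xi\xi}(x)\le C_0$ for all $x\in D$ and all unit vectors $\xi$, which is precisely \eqref{hq-2}. The main obstacle I anticipate is the precise algebraic balance at the maximum point: after the critical-point substitution, one must verify that the term $F^{ii}u_{11i}^2/u_{11}^2$ can be absorbed into $\beta F^{ii}\lambda_i^2$ rather than overwhelm it, and that the $|u|^{-2}$ term produced by $D^2\log(-u)$ is controlled by the same good term. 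This balance works precisely because the Hessian-quotient structure provides the pointwise lower bound on $F^{11}\lambda_1$; a weaker structural estimate would force a smaller exponent than $4$ in \eqref{hq-2}. The multiplicity case $\lambda_1=\lambda_2$ is handled by the standard upper-semicontinuity of $\log\lambda_{\max}$ together with a perturbation of the direction $\xi$, which is routine.
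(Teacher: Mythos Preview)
Your overall architecture is the right one, but the step you flag as ``the main obstacle'' is in fact a genuine gap, and your proposed resolution fails. The claimed Newton--Maclaurin bound $F^{11}\lambda_1\ge c_1(n,l)>0$ is \emph{false} for $l\ge 1$. With $F=\log S_n-\log S_l$ and $D^2u$ diagonal, a short computation gives
\[
F^{ii}\lambda_i=\frac{S_{n-1;i}\lambda_i}{S_n}-\frac{S_{l-1;i}\lambda_i}{S_l}=1-\frac{S_l-S_{l;i}}{S_l}=\frac{S_{l;i}}{S_l},
\]
so $F^{11}\lambda_1=S_{l;1}/S_l$. On the constraint $S_n=S_l$, send $\lambda_1\to\infty$ with $\lambda_2,\dots,\lambda_n$ bounded (which is consistent with the equation); then $S_l\sim\lambda_1 S_{l-1;1}$ and hence $S_{l;1}/S_l\to 0$. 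The same computation shows $\sum_i F^{ii}\lambda_i^2=(l+1)S_{l+1}/S_l$, which stays \emph{bounded} rather than growing like $\lambda_1$. Thus the ``good'' gradient term $\beta\sum_i F^{ii}\lambda_i^2$ that you produce cannot dominate $\lambda_1$ and the inequality never closes to give $(-u)^2\lambda_1\le C$. (Incidentally, the absorption of the critical-point terms into $\beta\sum_iF^{ii}\lambda_i^2$ forces $\beta\le 1/(4\|Du\|_\infty^2)$, i.e.\ $\beta$ must be \emph{small}, not large.) For $l=0$ your bound reads $F^{11}\lambda_1=1$ and the argument does go through---this is exactly the classical Monge--Amp\`ere case---but the quotient structure is different.

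The paper's proof avoids this trap by \emph{not} discarding the second-order concavity residual. After diagonalizing, the off-diagonal piece of $-\sum f_{ij,rs}u_{ij1}u_{rs1}$ contributes explicit positive terms of the form $\mu'\,\dfrac{S_{n-2;1i}S_l-S_nS_{l-2;1i}}{S_l^2}\,\dfrac{u_{11i}^2}{u_{11}}$ for $i\ge 2$; these are then compared \emph{directly} to the bad terms $f_{ii}\,u_{11i}^2/u_{11}^2$ via the algebraic identity
\[
\lambda_1\bigl(S_{n-2;1i}S_l-S_nS_{l-2;1i}\bigr)=S_{n-1;i}S_{l;i}+S_nS_{l-1;1i},
\]
which yields the pointwise inequality \eqref{ell-1} and hence $I_2\ge 0$. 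In other words, the third-derivative terms for $i\ge 2$ are absorbed by the refined concavity of the quotient, not by the gradient barrier. Only the $i=1$ term is handled through the critical-point equation. If you want to salvage your approach, you must either recover a lower bound on $F^{11}\lambda_1$ using the equation in a sharper way (none is available), or keep and exploit the off-diagonal Hessian of the operator as the paper does.
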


Let $\Omega\subset\mathbb{R}^n\times(-\infty,0]$ and $t\leq 0$, and let
$$\Omega(t)=\{x:(x,t)\in \Omega\}.$$
Suppose that $\Omega$ is a bounded domain and $\u{t}=\inf\{t:\Omega(t)\not=\emptyset\}.$ Let $\o{\Omega(\u{t})}$ denote the closure of $\Omega(\u{t})$ and $\partial \Omega(t)$ denote the boundary of $\Omega(t),$ we define the parabolic boundary of $\Omega$ as
$$\partial_p\Omega=(\o{\Omega(\u{t})}\times\{\u{t}\})\cup \bigcup_{t\in \mathbb{R},t\leq 0}(\partial \Omega(t)\times\{t\}).$$
The domain $\Omega\subset \mathbb{R}^{n}\times(-\infty,0]$ is called a bowl-shaped domain if for each $t$, $\Omega(t)$ is convex and for $t_1\leq t_2$, $\Omega(t_1)\subset \Omega(t_2)$. One can also refer to \cite{GH1}.

A function $u=u(x,t):\mathbb{R}^n\times(-\infty,0]\rightarrow \mathbb{R}$ is called a parabolically convex function if $u$ is strictly convex in $x$ and decreasing in $t$. Consider the equation
\begin{equation}\label{kl}-u_t\frac{S_{k}(D^2u)}{S_{l}(D^2u)}=1\ \ \mbox{in}\ \ \Omega,\end{equation}
\begin{equation}\label{kl-1}u=0\ \ \mbox{on}\ \ \partial_p\Omega.\end{equation}

Let $B_r=B_r(0)=\{x\in \mathbb{R}^n: |x|<r\}$ for $r>0$. Our results are as follows.

\begin{theorem}\label{gradient}
Suppose that $\Omega\subset B_{r}(0)\times (-\infty,0]$ is a bowl-shaped domain, $(0,0)\in \Omega$. Let $u\in C^{4,2}(\Omega)\cap C^{2,1}(\o{\Omega})$ be a parabolically $k-$convex solution of \eqref{kl} and \eqref{kl-1} satisfying for some positive constant $m_1$
$$u_t(x,t)\leq -m_1\ \ (x,t)\in \Omega.$$
Then
\begin{equation}\label{Du}
|Du(0,0)|\leq C \frac{\sup_{\Omega}u}{r},
\end{equation}
where $C$ is a positive constant depending only on $n,k,l,m_1$.
\end{theorem}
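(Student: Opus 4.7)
\medskip
\noindent\textbf{Proof proposal.} The estimate is a Guti\'errez--Huang type interior gradient bound; the parabolic Monge--Amp\`ere case $(k,l)=(n,0)$ is Theorem~4.1 of \cite{GH1}. Since the solution is only $k$-convex in $x$ (and not fully convex when $k<n$), the clean support-plane reduction on the top slice is unavailable in general, so the plan is to run a parabolic maximum-principle argument on an auxiliary function that combines $|Du|^{2}$ with a spatial cutoff and a weight depending on $u$ and $t$.

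\medskip
\noindent\emph{Step 1: auxiliary function.} I would study, on $\Omega$,
\[
H(x,t)=\eta(x)\,|Du(x,t)|^{2}\,e^{-\mu u(x,t)+\nu t},
\]
where $\eta(x)=(r^{2}-|x|^{2})^{2}$ is a radial cutoff vanishing on $\partial B_{r}(0)$ and $\mu,\nu>0$ are to be chosen in terms of $n,k,l,m_{1}$. Since $\Omega\subset B_{r}(0)\times(-\infty,0]$, $\eta\ge 0$ on $\Omega$ and vanishes on the part of $\partial\Omega$ meeting $\partial B_{r}$. On the remaining part of $\partial_{p}\Omega$, $u=0$ forces the tangential component of $Du$ to vanish, while the normal component is controlled by a Hopf-type barrier using $-u_{t}\ge m_{1}$ and the equation. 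Together these arrange that either $\max_{\overline{\Omega}}H$ is already dominated by $C\sup_{\Omega}|u|^{2}$ from the boundary, or else is attained at an interior point $(x^{*},t^{*})\in\Omega$.

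\medskip
\noindent\emph{Step 2: linearization and computation.} At an interior maximum we have $H_{t}\ge 0$, $DH=0$, $D^{2}H\le 0$. Writing $F=S_{k}(D^{2}u)/S_{l}(D^{2}u)$ and $F^{ij}=\partial F/\partial u_{ij}$, differentiating $-u_{t}F=1$ in $x_{e}$ yields $u_{te}=-u_{t}F^{ij}u_{ije}/F$. Substituting these identities into the standard parabolic combination $F^{ij}(\log H)_{ij}-(-u_{t})^{-1}(\log H)_{t}\le 0$ at $(x^{*},t^{*})$, and using the concavity of $F^{1/(k-l)}$ on $\Gamma_{k}$ (Garding--MacLaurin) to handle the cross terms $F^{ij,pq}u_{ije}u_{pqe}$, one should obtain, after a standard expansion, an inequality of the form
\[
\mu\,F^{ij}u_{i}u_{j}+\nu\,(-u_{t})^{-1}+\text{(cutoff terms in $\eta$)}\le C\bigl(|Du|^{2}+\text{lower order}\bigr),
\]
with constants depending on $n,k,l,m_{1}$. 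Using $-u_{t}\ge m_{1}$ and choosing $\mu,\nu$ sufficiently large forces $\eta|Du|^{2}\le C\sup_{\Omega}|u|^{2}$ at $(x^{*},t^{*})$, and hence at $(0,0)$. Since $\eta(0)=r^{4}$, this yields $|Du(0,0)|\le C\sup_{\Omega}|u|/r$ with $C=C(n,k,l,m_{1})$.

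\medskip
\noindent\emph{Main obstacle.} The delicate point is closing the computation in Step 2: the ratio structure of $S_{k}/S_{l}$ complicates the sign control of $F^{ij,pq}u_{ije}u_{pqe}$, and one must calibrate $\mu$ and $\nu$ simultaneously so that $\mu F^{ij}u_{i}u_{j}$ dominates the gradient-squared contributions from $\eta$ while $\nu$ absorbs the parabolic term via $-u_{t}\ge m_{1}$; the MacLaurin-type concavity of $F^{1/(k-l)}$ on $\Gamma_{k}$ is the essential ingredient. A secondary difficulty is the boundary behavior: on the lateral part of $\partial_{p}\Omega$ lying strictly inside $B_{r}$, $u=0$ but $Du\ne 0$, and one needs a slice-by-slice barrier estimate to see that $H$ there is controlled by $\sup_{\Omega}|u|^{2}$. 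Both are technical but standard in the $k$-admissible setting, and once handled the estimate follows from the maximum-principle computation above.
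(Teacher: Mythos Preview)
Your strategy---maximum principle on an auxiliary function combining a spatial cutoff, a weight in $u$, and a gradient quantity---is exactly the paper's. The paper takes $\Psi(x,t;\xi)=\varphi(u)\,\omega(x)\,u_\xi$ with $\omega(x)=1-|x|^2/r^2$ and $\varphi(\tau)=(M-\tau)^{-1/2}$, $M=3\,\mathrm{osc}_\Omega u+\sup_\Omega u$; this is precisely Chen's elliptic test function from \cite{chen}, imported verbatim. After differentiating $-u_tF(D^2u)=1$ once in $x_1$ (with $F=S_k/S_l$) one gets $u_{1t}/u_t=u_t\sum_{i,j} F_{ij}u_{ij1}$, which makes the parabolic contribution to $L(\log\Psi)$ cancel against the third-order piece; what remains is literally the elliptic inequality of \cite{chen}, and the paper simply says so. No time weight $e^{\nu t}$, no boundary barrier, and no concavity are used.

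Your proposal contains a genuine error on this last point: you invoke the concavity of $(S_k/S_l)^{1/(k-l)}$ to control terms of the form $F^{ij,pq}u_{ije}u_{pqe}$, but such terms do not occur in a gradient estimate. They arise only when the equation is differentiated \emph{twice}, as in the Pogorelov bound for $D^2u$ (Theorems~\ref{thm-hqe} and~\ref{thm-ns}); for $|Du|$ one differentiates once and obtains only $\sum F_{ij}u_{ij\alpha}$, which the differentiated equation converts directly into $u_{\alpha t}/u_t$. Relatedly, your displayed inequality is not what the computation produces: the weight $e^{-\mu u}$ contributes $-\mu F^{ij}u_{ij}=-\mu(k-l)F$, a zeroth-order quantity in $Du$, not the quadratic term $\mu F^{ij}u_iu_j$ that you write down. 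With these corrections an argument along your lines can be made to close, but the paper's route---carry Chen's auxiliary function over unchanged and observe that the parabolic terms cancel---is shorter and sidesteps the parameters $\mu,\nu$ and the boundary Hopf discussion entirely.
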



Suppose that $u\in C^{4,2}(\o{\Omega})$ and there exist constants $0<m_1\leq m_2$ such that for any $(x,t)\in\mathbb{R}^n\times(-\infty,0]$,
\begin{equation}\label{ut}-m_2\leq u_t(x,t)\leq -m_1.\end{equation}

\begin{theorem}\label{thm-ns} Let $k=n$ and $\Omega$ be a bounded bowl-shaped domain in $\mathbb{R}^n\times (-\infty,0].$ Assume that $u\in C^{4,2}(\o{\Omega})$ is a parabolically convex solution to the Dirichlet problem \eqref{kl} and \eqref{kl-1} and satisfies \eqref{ut} in $\Omega$.
Then there exists a constant $C_0=C_0(n,m_1,||u||_{C^1(\Omega)})$ such that
\begin{equation}\label{sec-est}\sup_{(x,t)\in \Omega}|u(x,t)|^4|D^2u(x,t)|\leq C_0.\end{equation}
\end{theorem}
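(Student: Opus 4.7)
The plan is to prove the Pogorelov-type bound \eqref{sec-est} by a maximum principle argument on an auxiliary function that combines $u_{\xi\xi}$, $(-u)^\beta$ and $|Du|^2$, adapting the argument for the elliptic case (Theorem \ref{thm-hqe}) to the parabolic setting. Since $k=n$, equation \eqref{kl} may be written in logarithmic form as
\begin{equation*}
G(u_t, D^2u) := \log(-u_t) + \log\det D^2u - \log S_l(D^2u) = 0,
\end{equation*}
and its linearization is the parabolic operator
\begin{equation*}
\mathcal{L} := F^{ij}\partial_{ij} + \frac{1}{u_t}\partial_t, \qquad F^{ij} := u^{ij} - \frac{1}{S_l(D^2u)}\frac{\partial S_l(D^2u)}{\partial u_{ij}},
\end{equation*}
which annihilates every first derivative $u_p$.

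For a unit vector $\xi\in S^{n-1}$, I would consider the test function
\begin{equation*}
W(x,t;\xi) = \beta\log(-u(x,t)) + \log u_{\xi\xi}(x,t) + \tfrac{a}{2}|Du(x,t)|^2,
\end{equation*}
with $\beta = 4$ and a constant $a>0$ to be chosen depending on $n$, $l$, $m_1$ and $||u||_{C^1(\Omega)}$. Since $u=0$ on $\partial_p\Omega$, $W\to -\infty$ at the parabolic boundary, so the maximum of $W$ over $\o{\Omega}\times S^{n-1}$ is attained at an interior spatial point $(x_0,t_0)$ in some direction $\xi_0$. Rotating coordinates so that $D^2u(x_0,t_0)$ is diagonal and $\xi_0 = e_1$, the bound $|D^2u(x_0,t_0)|\leq n u_{11}(x_0,t_0)$ reduces the task to controlling $u_{11}(-u)^4$ at this single point. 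At the critical point, $W_p=0$ for every $p$ and $W_t\geq 0$ (with equality when $t_0$ is interior in time); since $1/u_t<0$, this yields $\mathcal{L}W\leq 0$.

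The heart of the proof is to differentiate $G=0$ twice in $x_1$ and use the resulting expression for $F^{ij}u_{11ij}+u_{11t}/u_t$, in which the second-order term $F^{ij,rs}u_{ij1}u_{rs1}$ is nonpositive on the positive cone $\Gamma_n$ by concavity of $\log(S_n/S_l)^{1/(n-l)}$, together with controlled lower-order derivatives of $\log(-u_t)$. The critical identities $W_p=0$ allow the substitution
\begin{equation*}
\frac{u_{11p}}{u_{11}} = -\frac{\beta u_p}{-u} - a\sum_j u_j u_{jp},
\end{equation*}
which converts all third-order derivatives into lower-order quantities. Combining with the Newton-MacLaurin identities $\sum_i F^{ii}u_{ii}=n-l$ and $\sum_i F^{ii}\geq c(n,l)>0$ on $\Gamma_n$ and inserting the result into $\mathcal{L}W\leq 0$ should produce an inequality of the form
\begin{equation*}
a\sum_i F^{ii}u_{ii}^2 u_i^2 + \frac{c\beta}{(-u)^2}\sum_i F^{ii}u_i^2 \;\leq\; C(n,l,m_1)\bigl(1 + a||u||_{C^1(\Omega)}^2\bigr)\sum_i F^{ii},
\end{equation*}
from which the desired upper bound on $u_{11}(x_0,t_0)(-u(x_0,t_0))^4$, and hence \eqref{sec-est}, follows.

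The main obstacle will be the bookkeeping of the $\log S_l$ contributions to $F^{ij}$ and to its second spatial derivatives, which must be combined with the $\log\det$ part so that the dominant term is the good quantity $a\sum_i F^{ii}u_{ii}^2 u_i^2$; this is the essential content of the elliptic calculation of Theorem \ref{thm-hqe}. The parabolic piece should be comparatively mild: because $-u_t\geq m_1$, the term $u_{11t}/(u_t u_{11})$ produced by the twice-differentiated equation is bounded by a constant depending only on $m_1$, and the contribution $W_t/u_t$ at the maximum carries the favorable sign. A suitable choice of the constant $a$, analogous to that made in the elliptic proof, then closes the argument.
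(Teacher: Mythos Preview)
Your overall strategy—a Pogorelov-type auxiliary function and the parabolic maximum principle—matches the paper's. The gap is in how you dispose of the third-order terms $u_{11i}^2/u_{11}^2$ for $i\geq 2$.

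You propose to discard the full concavity contribution by using only $F^{ij,rs}u_{ij1}u_{rs1}\leq 0$, and then to eliminate every $u_{11i}/u_{11}$ via the critical-point relations $W_i=0$. This does not close, even for $l=0$. After substituting $u_{11i}/u_{11}=-\beta u_i/u-a u_i u_{ii}$ and expanding, the term $-\sum_i F^{ii}u_{11i}^2/u_{11}^2$ produces $-(\beta+\beta^2)\sum_i F^{ii}u_i^2/u^2$; for $i\geq 2$ the coefficient $F^{ii}=S_{l;i}/(\lambda_i S_l)$ can be arbitrarily large (the equation $S_n/S_l=-1/u_t$ does \emph{not} bound $\lambda_n$ from below, e.g.\ take $n=3$, $l=1$, $\lambda_1=\lambda_2=M\to\infty$, $\lambda_3\sim 2c/M\to 0$), while the matching good term $aF^{ii}\lambda_i^2=a\lambda_iS_{l;i}/S_l$ is small. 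Your displayed final inequality reflects this: the $u_i^2$ factors are misplaced, and there is no reason $\sum_i F^{ii}u_i^2/u^2$ should be bounded by $C\sum_i F^{ii}$.

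The paper's remedy is to \emph{retain} the off-diagonal piece of the concavity term. Only the $i=1$ relation is used to eliminate $u_{111}$; for $i\geq 2$ the paper substitutes in the opposite direction, writing $u_i/u$ in terms of $u_{11i}/u_{11}$, so that the burden falls on the third-order quantities. These are then grouped with the retained off-diagonal concavity into
\[
I_2=\frac{2}{n-l}\Bigl(\tfrac{S_n}{S_l}\Bigr)^{\frac{1}{n-l}-1}\sum_{i\geq 2}\Bigl(\frac{S_{n-2;1i}S_l-S_nS_{l-2;1i}}{S_l^2}-\frac{3}{4}\,\frac{S_{n-1;i}S_l-S_nS_{l-1;i}}{\lambda_1 S_l^2}\Bigr)\frac{u_{11i}^2}{\lambda_1},
\]
and the algebraic inequality \eqref{ell-1} (derived from \eqref{2.15-1}--\eqref{2.16-1}) shows each summand is positive, whence $I_2\geq 0$. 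After this cancellation the only surviving $1/u^2$ term carries the single coefficient $f_{11}$, which is beaten by $f_{11}u_{11}^2\geq \frac{1}{n}u_{11}$. This $I_2\geq 0$ step is precisely the idea your outline omits. (A minor point: your assertion that $u_{11t}/(u_tu_{11})$ is ``bounded by a constant depending only on $m_1$'' is also incorrect—$u_{11t}$ has no a priori bound and must be handled jointly with $F^{ii}u_{11ii}$ through the twice-differentiated equation, as you in fact say earlier.)
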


\begin{remark}
For the proof of Pogorelov type estimate, the cases of $u_{nn}\geq \kappa u_{11}$ for some small constant $\kappa$ and  $u_{nn}\leq \kappa u_{11}$ are often divided, see \cite{bqtw, cw, nt}. Here for the proof of Theorems \ref{thm-hqe} and \ref{thm-ns}, we adopt the inequalities in \cite{chen} and do not divide the cases.
\end{remark}
Due to Theorems \ref{gradient} and \ref{thm-ns}, we have

\begin{theorem}\label{thm}
Let $k=n$ and $u\in C^{4,2}(\mathbb{R}^n\times(-\infty,0])$ be a parabolically convex solution to \eqref{hq1}.
Assume that \eqref{ut} holds and there exist positive constants $A_1, B$ such that for $x\in \mathbb{R}^n$,
\begin{equation}\label{u0} u(x,0)\leq A_1|x|^2+B.\end{equation}
 Then $u$ must be $u(x,t)=-mt+P(x)$, where $m>0$ is a constant and $P$ is a convex quadratic polynomial.
\end{theorem}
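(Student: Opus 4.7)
The plan is to follow the standard Bernstein strategy: apply Theorems \ref{gradient} and \ref{thm-ns} on an expanding family of bounded bowl-shaped sub-level sets of $u$ to extract a uniform global bound on $|D^2 u|$, and then invoke a Liouville-type theorem for the linearized equation in order to reduce to the elliptic Bernstein theorem of Bao--Chen--Guan--Ji \cite{BCGJ}.

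For each $h>u(0,0)$, I would consider the sub-level set
$$\Omega_h=\{(x,t)\in\mathbb{R}^n\times(-\infty,0]:u(x,t)<h\}.$$
Combining the equation with the bound $-u_t\le m_2$ in \eqref{ut} and Maclaurin's inequality $S_l(D^2u)\ge\binom{n}{l}(\det D^2u)^{l/n}$ yields a uniform lower bound $\det D^2u\ge c_0(n,l,m_2)>0$. For a convex function on $\mathbb{R}^n$ with Monge-Amp\`ere measure bounded below, the Alexandrov sections are bounded, which together with the strict spatial convexity of $u(\cdot,t)$ forces $u(\cdot,t)\to\infty$ as $|x|\to\infty$, so each slice $\Omega_h(t)$ is bounded; the estimate $u_t\le -m_1$ in \eqref{ut} then forces $\Omega_h$ to be bounded in $t$ as well. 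Thus $\Omega_h$ is a bounded bowl-shaped domain containing $(0,0)$, and $v:=u-h$ vanishes on $\partial_p\Omega_h$. The upper bound \eqref{u0} together with $u_t\ge -m_2$ yields $u(x,t)\le A_1|x|^2+B+m_2|t|$ throughout $\mathbb{R}^n\times(-\infty,0]$, and spatial convexity of $u(\cdot,t)$ then produces $\|Du\|_{L^\infty(\Omega_h)}\le C\sqrt{h}$, so $\|v\|_{C^1(\Omega_h)}\le Ch$.

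Applying Theorem \ref{thm-ns} to $v$ on $\Omega_h$ then gives
$$(h-u(x,t))^4|D^2u(x,t)|\le C_0\bigl(n,m_1,\|v\|_{C^1(\Omega_h)}\bigr)\quad\text{on }\Omega_h.$$
Tracking the dependence of the Pogorelov constant on the $C^1$ norm and evaluating at any fixed interior point $(x_0,t_0)$, for which $h-u(x_0,t_0)\sim h$ as $h\to\infty$, this yields a uniform bound $|D^2u(x_0,t_0)|\le C$ independent of $(x_0,t_0)\in\mathbb{R}^n\times(-\infty,0]$. Theorem \ref{gradient} provides complementary interior gradient control in the limiting procedure.

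Finally, with $|D^2u|$ uniformly bounded and $\det D^2u$ bounded above and away from zero, I would close the argument as follows. Differentiating $-u_t\det D^2u/S_l(D^2u)=1$ in $t$ produces a uniformly parabolic linear equation for $w:=u_t$ with bounded coefficients; since $w\in[-m_2,-m_1]$, the parabolic Liouville theorem on $\mathbb{R}^n\times(-\infty,0]$ gives $w\equiv -m$ for some constant $m\in[m_1,m_2]$. Hence $u(x,t)=-mt+P(x)$ with $P$ satisfying $m\det D^2P=S_l(D^2P)$, and after normalization the elliptic Bernstein theorem of \cite{BCGJ} forces $P$ to be a convex quadratic polynomial. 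The principal obstacle is the Pogorelov step above: verifying that the constant in Theorem \ref{thm-ns} scales with $\|v\|_{C^1(\Omega_h)}$ in a manner compatible with the factor $(h-u)^4$ on the left, so that letting $h\to\infty$ produces a genuine global Hessian bound rather than an exploding one.
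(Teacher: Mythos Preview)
Your overall strategy is sound, but the execution has a genuine gap at exactly the point you flag as the ``principal obstacle,'' and the paper resolves it by a device you are missing.

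First, the claim that ``spatial convexity of $u(\cdot,t)$ then produces $\|Du\|_{L^\infty(\Omega_h)}\le C\sqrt h$'' is not justified. Convexity together with the \emph{upper} bound $u(x,t)\le A_1|x|^2+B+m_2|t|$ does not by itself control the gradient on the whole section; for that you also need a matching \emph{lower} bound $u(x,0)\ge A_2|x|^2$, which bounds $\operatorname{diam}\Omega_h(0)$ by $C\sqrt h$. The paper proves this as a separate lemma (via John's ellipsoid and the Monge--Amp\`ere lower bound $\det D^2u\ge c_0$ that you already noted). Without it, $\Omega_h$ could be highly eccentric and both the $C^1$ norm and the Pogorelov constant would be uncontrolled.

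Second, even granting the lower bound, the paper avoids your scaling-of-constants problem altogether by a parabolic rescaling: setting $w(x,t)=R^{-2}\bigl(u(Rx,R^2t)-R^2\bigr)$, the section $U_R=\{w<0\}$ lies in a \emph{fixed} cylinder $B_{1/\sqrt{A_2}}\times(-1/m_1,0]$ independently of $R$, and $|w|$, $|Dw|$ are uniformly bounded. Theorems~\ref{gradient} and~\ref{thm-ns} then give uniform $C^2$ bounds on $w$ on a fixed interior set, and Evans--Krylov yields $\|w\|_{C^{2+\alpha,1+\alpha/2}}\le C$. Scaling back gives $[D^2u]_{C^\alpha}+[u_t]_{C^\alpha}\le CR^{-\alpha}\to0$, so $D^2u$ and $u_t$ are constant directly. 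This replaces both your constant-tracking step and your Liouville-plus-elliptic-Bernstein endgame.

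Your alternative endgame (differentiate in $t$, apply parabolic Liouville to the bounded $u_t$, then invoke \cite{BCGJ}) is correct once a uniform two-sided eigenvalue bound on $D^2u$ is in hand, and is a legitimate route; but the rescaling is what actually delivers that bound without delicate bookkeeping.
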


The remainders of this paper are arranged as follows. Theorem \ref{thm-hqe} will be proved in Section 2. In Section 3, we will reach the interior gradient estimate and the Pogorelov type estimate, i.e., Theorems \ref{gradient}-\ref{thm-ns}. In section 4, we will prove the Bernstein theorem, i.e., Theorem \ref{thm}.

\section{Proof of Theorem \ref{thm-hqe}.}

For $\lambda=(\lambda_1,\dots,\lambda_n)\in \mathbb{R}^n$, denote $$S_{l;i_1i_2\dots i_j}(\lambda)=S_l(\lambda)|_{\lambda_{i_1}=\lambda_{i_2}=\dots=\lambda_{i_j}=0}.$$
For any $i$, we have
$$\frac{\partial S_l(\lambda)}{\partial \lambda_i}=S_{l-1;i}(\lambda).$$
Next we recall some basic properties for  elementary symmetric functions, which can be found in \cite{BCGJ, chen, cw}.
\begin{prop}\label{prop1}Let $l=0,1,\dots,n$ and $1\leq i\leq n.$
\begin{equation}\label{2.15-1}S_l(\lambda)=S_{l-1;i}\lambda_i+S_{l;i},\end{equation}
\begin{equation}\label{2.16-1} S_{n-1;j}S_l-S_{l-1;j}S_n=S_{n-1;j}S_{l;j},\end{equation}
\begin{equation}\label{ele-1}\sum_{i=1}^{n} \lambda_iS_{l-1;i}(\lambda)=lS_l(\lambda).\end{equation}
\end{prop}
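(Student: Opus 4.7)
The three identities in Proposition \ref{prop1} are standard combinatorial facts about elementary symmetric polynomials, and my plan is to prove each by a direct expansion argument, relying only on the definition $S_l(\lambda)=\sum_{1\le i_1<\cdots<i_l\le n}\lambda_{i_1}\cdots\lambda_{i_l}$ together with the notational convention that $S_{l;j}$ denotes the same sum but restricted to index tuples that avoid $j$ (equivalently, setting $\lambda_j=0$). I would first fix notation, noting in particular the convention $S_0\equiv 1$ and the degenerate fact $S_n=\lambda_1\cdots\lambda_n$, so $S_{n;j}=0$ for every $j$.

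For \eqref{2.15-1} I would partition the index tuples $(i_1,\dots,i_l)$ defining $S_l(\lambda)$ into those that contain $i$ and those that do not. The tuples containing $i$ are in bijection with $(l-1)$-subsets of $\{1,\dots,n\}\setminus\{i\}$, whose weighted sum is $\lambda_i\,S_{l-1;i}(\lambda)$; the tuples not containing $i$ give exactly $S_{l;i}(\lambda)$. Summing yields the identity.

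For \eqref{ele-1} I would expand the left-hand side using the same definition: $\sum_i \lambda_i S_{l-1;i}(\lambda)$ is a sum over pairs $(i,J)$ where $J$ is an $(l-1)$-subset of $\{1,\dots,n\}\setminus\{i\}$; reindexing by the $l$-subset $I=J\cup\{i\}$, each $I$ is produced exactly $l$ times (once for each choice of $i\in I$), so the sum equals $l\,S_l(\lambda)$. Equivalently one may just quote Euler's identity for the homogeneous polynomial $S_l$ of degree $l$ together with $\partial S_l/\partial\lambda_i = S_{l-1;i}$, the latter being immediate from \eqref{2.15-1}.

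For \eqref{2.16-1}, the key observation is that specializing \eqref{2.15-1} at $i=j$ gives both $S_l = S_{l-1;j}\lambda_j + S_{l;j}$ and $S_n = S_{n-1;j}\lambda_j + S_{n;j} = S_{n-1;j}\lambda_j$, since $S_{n;j}=0$. Substituting these into $S_{n-1;j}S_l - S_{l-1;j}S_n$, the $\lambda_j$-terms cancel and the remaining term is $S_{n-1;j}S_{l;j}$. The only mild obstacle, and really the only thing to be careful about, is keeping the bookkeeping between the two conventions (``restrict the index set'' vs.\ ``set $\lambda_j=0$'') consistent and handling edge cases $l=0$ and $l=n$, but no analytic input is needed and each identity reduces to one or two lines of combinatorial rearrangement.
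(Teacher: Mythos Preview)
Your proof is correct; the paper itself does not prove this proposition but merely cites it as standard (referring to \cite{BCGJ, chen, cw}), and your direct combinatorial arguments are exactly the standard verifications one finds in those sources. In particular, your derivation of \eqref{2.16-1} from \eqref{2.15-1} together with $S_{n;j}=0$ is the usual one-line proof.
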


Let $S_{l;i}(M)$ denote the symmetric function $S_l(M)$ with the matrix $M$ deleting the $i-$row and the $i-$column.

\begin{prop}\label{prop2}
Assume that the matrix $M=(M_{ij})$ is diagonal, then
$$\frac{\partial S_l(M)}{\partial M_{ij}}=
\begin{cases}
S_{l-1;i}(M),\ &\mbox{if}\ i=j,\\
0,\ &\mbox{if}\ i\not=j.
\end{cases}$$
\end{prop}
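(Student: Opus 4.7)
The plan is to start from the principal-minor expansion
$$S_l(M)=\sum_{|I|=l}\det(M_I),$$
where $M_I$ denotes the $l\times l$ principal submatrix of $M$ indexed by $I\subset\{1,\ldots,n\}$. This representation is convenient because the derivative of a determinant with respect to a matrix entry is, up to sign, a cofactor, and for a diagonal matrix the off-diagonal cofactors simplify dramatically. The proof then splits naturally into the cases $i=j$ and $i\ne j$.

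For $i\ne j$, the entry $M_{ij}$ appears in $\det(M_I)$ only when both $i,j\in I$. For such $I$, the partial $\partial\det(M_I)/\partial M_{ij}$ equals, up to sign, the determinant of the submatrix of $M_I$ obtained by deleting row $i$ and column $j$. Since $M$ is diagonal, the row indexed by $j$, which survives in that sub-submatrix because only row $i$ was removed, has entries $M_{jk}$ with $k\in I\setminus\{j\}$, all of which vanish; hence every term in the sum is zero and $\partial S_l(M)/\partial M_{ij}=0$.

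For $i=j$, only subsets $I$ containing $i$ contribute. For such $I$, the derivative $\partial\det(M_I)/\partial M_{ii}$ is the principal minor obtained by deleting row and column $i$ from $M_I$, which for diagonal $M$ collapses to $\prod_{k\in I\setminus\{i\}}M_{kk}$. Summing over all $l$-subsets $I$ that contain $i$ produces the $(l-1)$-th elementary symmetric function of $\{M_{kk}\}_{k\ne i}$, which is precisely $S_{l-1;i}(M)$ by the notation fixed at the beginning of Section 2. There is no essential obstacle beyond careful bookkeeping of index subsets; an alternative route is to differentiate the characteristic-polynomial identity $\det(tI+M)=\sum_{l}t^{n-l}S_l(M)$ with respect to $M_{ij}$ and invoke Jacobi's formula, which yields the same conclusion with less index manipulation but essentially the same linear algebra.
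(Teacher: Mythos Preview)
Your argument is correct. The principal-minor expansion $S_l(M)=\sum_{|I|=l}\det(M_I)$ together with the cofactor formula for $\partial\det/\partial M_{ij}$ handles both cases cleanly: for $i\ne j$ the surviving row $j$ in the cofactor has only off-diagonal entries of a diagonal matrix, hence vanishes; for $i=j$ the diagonal cofactors collapse to products and sum to the elementary symmetric polynomial in the remaining eigenvalues.

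There is nothing to compare against: the paper does not prove Proposition~\ref{prop2} but simply records it as a standard identity with references to \cite{BCGJ,chen,cw}. Your direct verification is exactly the kind of argument those references contain, and the alternative route you mention via Jacobi's formula applied to $\det(tI+M)$ is an equally standard derivation.
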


\begin{lemma}\label{lem1}
Let $\lambda=(\lambda_1,\dots,\lambda_n)$ with $\lambda_i>0,i=1,2,\dots,n.$ Then for $1\leq j\leq n,1\leq l<n,$ we have
\begin{equation}\label{eq6}
S_{n-2;ij}S_l-S_nS_{l-2;ij}>0.
\end{equation}
\end{lemma}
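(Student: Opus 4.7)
The plan is to reduce the inequality to an elementary positivity statement by first factoring $S_{n-2;ij}$ out of $S_n$ and then using the standard bilinear expansion of $S_l$ in the two distinguished variables $\lambda_i,\lambda_j$.

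First I would observe that since every $\lambda_k>0$ and (implicitly) $i\ne j$, we have
$$S_{n-2;ij}=\prod_{k\neq i,j}\lambda_k>0,\qquad S_n=\lambda_i\lambda_j\,S_{n-2;ij}.$$
Substituting the second identity, the quantity to be estimated factors as
$$S_{n-2;ij}S_l-S_nS_{l-2;ij}=S_{n-2;ij}\bigl(S_l-\lambda_i\lambda_j S_{l-2;ij}\bigr),$$
so it suffices to prove strict positivity of the bracketed expression.

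Next I would apply the recursion \eqref{2.15-1} from Proposition \ref{prop1} twice: first with index $i$ to obtain $S_l=\lambda_i S_{l-1;i}+S_{l;i}$, and then with index $j$ applied to each of $S_{l-1;i}$ and $S_{l;i}$. Combining these gives the bilinear expansion
$$S_l=S_{l;ij}+(\lambda_i+\lambda_j)S_{l-1;ij}+\lambda_i\lambda_j\,S_{l-2;ij},$$
so the bracketed expression collapses to
$$S_l-\lambda_i\lambda_j S_{l-2;ij}=S_{l;ij}+(\lambda_i+\lambda_j)S_{l-1;ij}.$$
Because $1\le l<n$, we have $0\le l-1\le n-2$, so $S_{l-1;ij}$ is a nonempty sum of products of strictly positive $\lambda_k$'s (with the convention $S_{0;ij}=1$ handling the boundary case $l=1$); hence $S_{l-1;ij}>0$ and $S_{l;ij}\ge0$. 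The strict inequality \eqref{eq6} follows.

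There is really no main obstacle here—the argument is a direct algebraic identity plus positivity. The only points that require minor care are tracking the edge cases $l=1$ (where $S_{l-2;ij}=S_{-1;ij}$ should be interpreted as $0$) and $l=n-1$ (where $S_{l-1;ij}=S_{n-2;ij}>0$), and the implicit assumption $i\ne j$ needed for the factorization $S_n=\lambda_i\lambda_j S_{n-2;ij}$ to make sense.
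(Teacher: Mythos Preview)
Your proof is correct and is in fact somewhat more direct than the paper's. The paper multiplies the quantity by $\lambda_i$, uses the recursion \eqref{2.15-1} to rewrite $\lambda_iS_{n-2;ij}=S_{n-1;j}-S_{n-1;ij}=S_{n-1;j}$ and $\lambda_iS_{l-2;ij}=S_{l-1;j}-S_{l-1;ij}$, and then invokes the separate identity \eqref{2.16-1} to collapse $S_{n-1;j}S_l-S_{l-1;j}S_n$ into $S_{n-1;j}S_{l;j}$, arriving at
\[
\lambda_i\bigl(S_{n-2;ij}S_l-S_nS_{l-2;ij}\bigr)=S_{n-1;j}S_{l;j}+S_nS_{l-1;ij}>0.
\]
Your route avoids \eqref{2.16-1} entirely: recognizing $S_n=\lambda_i\lambda_jS_{n-2;ij}$ lets you factor out $S_{n-2;ij}$ immediately, and a two-step application of \eqref{2.15-1} (in $\lambda_i$, then in $\lambda_j$) reduces the bracket to the manifestly positive $S_{l;ij}+(\lambda_i+\lambda_j)S_{l-1;ij}$. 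The two final expressions agree once one notes $S_{n-1;j}=\lambda_iS_{n-2;ij}$, so the arguments are algebraically equivalent; yours is self-contained and makes the boundary cases $l=1$ and $l=n-1$ transparent, while the paper's fits a pattern it reuses later (e.g.\ in deriving \eqref{ell-1}).
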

\begin{proof}
According to the equalities $S_{n-1;ij}=0$ and \eqref{2.15-1},
then
\begin{eqnarray*}
\lambda_i(S_{n-2;ij}S_l-S_nS_{l-2;ij})&=&\lambda_iS_{n-2;ij}S_l-\lambda_iS_{l-2;ij}S_n\\
&=&(S_{n-1;j}-S_{n-1;ji})S_l-(S_{l-1;j}-S_{l-1;ji})S_n\\
&=&S_{n-1;j}S_l-S_{l-1;j}S_n+S_nS_{l-1;ji}.
\end{eqnarray*}
Owing to \eqref{2.16-1}, so
\begin{eqnarray*}
\lambda_i(S_{n-2;ij}S_l-S_nS_{l-2;ij})&=&S_{n-1;j}S_{l;j}+S_nS_{l-1;ji}>0.
\end{eqnarray*}
As a result, \eqref{eq6} holds.
\end{proof}

\begin{lemma}\label{lem2}
Let $\lambda=(\lambda_1,\dots,\lambda_n)$ with $\lambda_1\geq\dots\geq \lambda_n>0.$ Then
\begin{equation}\label{lem2-1}\frac{S_{n-1;n}S_{l;n}}{S_l^2}\geq \frac{1}{n-1}\left(\frac{S_{n-1;1}S_{l;1}}{S_l^2}+\dots+\frac{S_{n-1;n-1}S_{l;n-1}}{S_l^2}\right).\end{equation}
\end{lemma}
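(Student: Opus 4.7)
The plan is to reduce \eqref{lem2-1} to a simple pointwise statement: namely, that under the ordering $\lambda_1\geq\cdots\geq\lambda_n>0$, each product $S_{n-1;i}S_{l;i}$ is non-decreasing in $i$, so that the $i=n$ term dominates each of the earlier terms and hence their average. After dividing through by $S_l^2>0$, this will give exactly \eqref{lem2-1}.

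First I would observe that $S_{n-1;i}$ is the single monomial $\prod_{j\neq i}\lambda_j=S_n/\lambda_i$, so $\lambda_1\geq\cdots\geq\lambda_n>0$ immediately yields $S_{n-1;1}\leq S_{n-1;2}\leq\cdots\leq S_{n-1;n}$.

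Next I would verify the corresponding monotonicity for $S_{l;i}$. Fix $i<j$ and expand in the remaining coordinate using \eqref{2.15-1}: this gives $S_{l;i}=S_{l;ij}+\lambda_jS_{l-1;ij}$ and $S_{l;j}=S_{l;ij}+\lambda_iS_{l-1;ij}$, so that $S_{l;j}-S_{l;i}=(\lambda_i-\lambda_j)S_{l-1;ij}\geq 0$, since $\lambda_i\geq\lambda_j$ and $S_{l-1;ij}>0$ (it is the $(l-1)$-th elementary symmetric function of $n-2$ strictly positive numbers, with $0\leq l-1\leq n-2$ thanks to $1\leq l<n$). Therefore $S_{l;1}\leq S_{l;2}\leq\cdots\leq S_{l;n}$ as well.

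Multiplying the two non-decreasing non-negative sequences, the products $S_{n-1;i}S_{l;i}$ form a non-decreasing sequence in $i$, and in particular $S_{n-1;i}S_{l;i}\leq S_{n-1;n}S_{l;n}$ for every $i=1,\dots,n-1$. Summing these $n-1$ inequalities, dividing by $n-1$, and then by $S_l^2>0$, yields \eqref{lem2-1}. I do not anticipate any genuine obstacle; the only thing to check carefully is positivity of the auxiliary symmetric functions $S_{l-1;ij}$, which follows automatically from $\lambda_i>0$ and the admissible range of $l$.
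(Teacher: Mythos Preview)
Your proposal is correct and follows essentially the same route as the paper: both arguments establish that $S_{n-1;i}$ and $S_{l;i}$ are non-decreasing in $i$ under the ordering $\lambda_1\geq\cdots\geq\lambda_n>0$, so the $i=n$ term dominates each of the others and hence their average. Your write-up in fact supplies more detail than the paper, which simply asserts the monotonicity of $S_{l;i}$; your use of the expansion $S_{l;i}=S_{l;ij}+\lambda_jS_{l-1;ij}$ makes this step explicit.
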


\begin{proof}
Because for $\lambda_1\geq \lambda_2\geq \dots\geq \lambda_n$, we have  $$S_{n-1;n}\geq S_{n-1;n-1}\geq \dots\geq S_{n-1;1}$$ and
$$S_{l;n}\geq S_{l;n-1}\geq \dots\geq S_{l;1},$$
so \begin{equation}\label{e4}\frac{S_{n-1;1}S_{l;1}}{S_l^2}\leq \frac{S_{n-1;n}S_{l;n}}{S_l^2},\dots,\frac{S_{n-1;n-1}S_{l;n-1}}{S_l^2}\leq \frac{S_{n-1;n}S_{l;n}}{S_l^2}.\end{equation}
Therefore,
$$(n-1)\frac{S_{n-1;n}S_{l;n}}{S_l^2}\geq \frac{S_{n-1;1}S_{l;1}}{S_l^2}+\dots+\frac{S_{n-1;n-1}S_{l;n-1}}{S_l^2},$$
which indicates that \eqref{lem2-1} holds.
\end{proof}

\begin{proof}[Proof of Theorem \ref{thm-hqe}]
Let $G=2\sup_{x\in D}|Du(x)|^2$ and $\phi(z)=(1-z/G)^{-1/8}.$ Define the function
$$\Phi(x;\eta)=(-u(x))^4\phi\left(\frac{|Du(x)|^2}{2}\right)D_{\eta\eta}u(x),\ x\in \o{D},|\eta|=1.$$
Suppose that
$\Phi(x_0;\eta_0)=\max\{\Phi(x;\eta)|x\in \o{D},|\eta|=1\}.$
Since $u=0$ on $\partial D$, then the point $x_0$ lies in $\o{D}\b\partial D.$ Without loss of generality, we may assume that $D^2u(x_0)=\mbox{diag}(u_{11}(x_0),u_{22}(x_0),\dots,u_{nn}(x_0))$ with $u_{11}(x_0)\geq u_{22}(x_0)\geq\dots\geq u_{nn}(x_0)>0$ and $\eta_0=e_1$, where $u_{ii}=\partial^2 u/\partial x_i^2$.
Then $$\Phi=\Phi(x;e_1)=(-u(x))^4\phi\left(\frac{|Du(x)|^2}{2}\right)u_{11}(x)$$
and its maximum is attained at $x_0$ and the eigenvalues of $D^2u(x_0)$ are $\lambda=(\lambda_1,\dots,\lambda_n)=(u_{11}(x_0),\dots,u_{nn}(x_0)).$

 Set
$$f(D^2u)=\left(\frac{S_k(D^2u)}{S_{l}(D^2u)}\right)^{\frac{1}{k-l}},$$
then $u$ satisfies
\begin{equation}\label{j-eq1}f(D^2u)=1\ \ \mbox{in}\ \ D.\end{equation}
By differentiating \eqref{j-eq1} with respect to $x_\alpha$, we have that
\begin{equation}\label{j-*}\sum_{i,j=1}^{n}f_{ij}u_{ij\alpha}=0,\end{equation}
where $f_{ij}=f_{ij}(D^2u)=\frac{\partial f(D^2u)}{\partial u_{ij}}.$
After differentiating \eqref{j-*} for $x_{\alpha}$ again, we can get that
\begin{align}\label{j-eq3}&\sum_{i,j=1}^{n}\sum_{r,s=1}^{n}f_{ij,rs}u_{ij\alpha}u_{rs\alpha}+\sum_{i,j=1}^{n}f_{ij}u_{ij\alpha\alpha}=0.\end{align}

Let $\mu=\mu(t)=t^{\frac{1}{k-l}}$, then
 $$f(D^2u)=\mu\left(\frac{S_k(D^2u)}{S_l(D^2u)}\right).$$
 The following calculations are all at $x_0$, unless otherwise stated.
So if $i=j$,
$$f_{ij}(D^2u)=f_{ii}(D^2u)=\mu^{\prime}\frac{S_{{k-1};i}S_l-S_kS_{l-1;i}}{S_l^2},$$
if $i\not=j$,
$$f_{ij}(D^2u)=\mu^{\prime}\frac{S_k^{ij}S_l-S_kS_l^{ij}}{S_l^2},$$
where $S_{k}^{ij}=\frac{\partial S_k(D^2u)}{\partial u_{ij}}.$

If $i=j,r=s,$
\begin{eqnarray*}&&f_{ij,rs}(D^2u)\\
&=&\mu^{\prime\prime}\dfrac{(S_{k-1;i}S_l-S_kS_{l-1;i})(S_{k-1;r}S_l-S_kS_{l-1;r})}{S_l^4}\\
&+&\mu^{\prime}\dfrac{(S_{k-2;ir}S_l+S_{k-1;i}S_{l-1;r}-S_{k-1;r}S_{l-1;i}-S_kS_{l-2;ir})S_l^2-2S_lS_{l-1;r}(S_{k-1;i}S_l-S_kS_{l-1;i})}{S_l^4},
\end{eqnarray*}
if $i\not=j,r=j,s=i,$
\begin{eqnarray*}&&f_{ij,rs}(D^2u)\\
&=&\mu^{\prime\prime}\dfrac{(S_k^{ij}S_l-S_kS_l^{ij})(S_k^{rs}S_l-S_kS_l^{rs})}{S_l^4}\\
&+&\mu^{\prime}\dfrac{(S_k^{ij,rs}S_l+S_k^{ij}S_l^{rs}-S_k^{rs}S_l^{ij}-S_kS_l^{ij,rs})S_l^2-2S_lS_l^{rs}(S_k^{ij}S_l-S_kS_l^{ij})}{S_l^4}\\
&=&\mu^{\prime}\dfrac{(-S_{k-2;ij}S_l+S_kS_{l-2;ij})S_l^2}{S_l^4},
\end{eqnarray*}
where the last equality uses the fact that $(D^2u)$ is diagonal, and $S_{k}^{ij,rs}=-S_{k-2;ij}$. Therefore,
$$f_{ij}(D^2u)=\\
\left\{\begin{array}{lll}
\mu^{\prime}\dfrac{S_{{k-1};i}S_l-S_kS_{l-1;i}}{S_l^2},\ &\mbox{if}\ i=j,\\
0,\ &\mbox{otherwise,}
\end{array}\right.
$$
and
$$f_{ij,rs}(D^2u)=\\
\left\{\begin{array}{lll}
\mu^{\prime\prime}\dfrac{(S_{k-1;i}S_l-S_kS_{l-1;i})(S_{k-1;r}S_l-S_kS_{l-1;r})}{S_l^4}\\
+\mu^{\prime}\dfrac{(S_{k-2;ir}S_l+S_{k-1;i}S_{l-1;r}-S_{k-1;r}S_{l-1;i}-S_kS_{l-2;ir})S_l^2}{S_l^4}\\
\vspace{5mm}
-\mu^{\prime}\dfrac{2S_lS_{l-1;r}(S_{k-1;i}S_l-S_kS_{l-1;i})}{S_l^4},\ &\mbox{if}\ i=j,r=s,\\
\vspace{5mm}
\mu^{\prime}\dfrac{(-S_{k-2;ij}S_l+S_kS_{l-2;ij})S_l^2}{S_l^4},\ &\mbox{if}\ i\not=j,r=j,s=i,\\
0,\ &\mbox{otherwise.}
\end{array}\right.
$$
As a result,
\begin{eqnarray*}
&&\sum_{i,j=1}^{n}\sum_{r,s=1}^{n}f_{ij,rs}(D^2u)u_{ij\alpha}u_{rs\alpha}\\
&=&\sum_{i=j,r=s}f_{ij,rs}(D^2u)u_{ij\alpha}u_{rs\alpha}+\sum_{i\not=j,r=j,s=i}f_{ij,rs}(D^2u)u_{ij\alpha}u_{rs\alpha}
\end{eqnarray*}
\begin{eqnarray*}
&=&\sum_{i=j,r=s}\left[\mu^{\prime\prime}\dfrac{(S_{k-1;i}S_l-S_kS_{l-1;i})(S_{k-1;r}S_l-S_kS_{l-1;r})}{S_l^4}\right.\\
&&+\mu^{\prime}\dfrac{(S_{k-2;ir}S_l+S_{k-1;i}S_{l-1;r}-S_{k-1;r}S_{l-1;i}-S_kS_{l-2;ir})S_l^2}{S_l^4}\\
&&\left.-\mu^{\prime}\dfrac{2S_lS_{l-1;r}(S_{k-1;i}S_l-S_kS_{l-1;i})}{S_l^4}\right]u_{ij\alpha}u_{rs\alpha}\\
&&+\sum_{i\not=j,r=j,s=i}\mu^{\prime}\dfrac{(-S_{k-2;ij}S_l+S_kS_{l-2;ij})S_l^2}{S_l^4}u_{ij\alpha}u_{rs\alpha}\\
&=&\sum_{i,r=1}^{n}\frac{\partial ^2}{\partial \lambda_i\partial \lambda_j}\left(\frac{S_k(\lambda)}{S_l(\lambda)}\right)^{\frac{1}{k-l}}u_{ii\alpha}u_{rr\alpha}+\sum_{i\not=j}\mu^{\prime}\dfrac{(-S_{k-2;ij}S_l+S_kS_{l-2;ij})S_l^2}{S_l^4}u_{ij\alpha}u_{ij\alpha}.
\end{eqnarray*}
Since $(\frac{S_k(\lambda)}{S_l(\lambda)})^{\frac{1}{k-l}}$ is concave in $\lambda=(\lambda_1,\dots,\lambda_n)$, then the first term in the above equality is nonpositive, so
\begin{eqnarray}
\sum_{i,j=1}^{n}\sum_{r,s=1}^{n}f_{ij,rs}(D^2u)u_{ij\alpha}u_{rs\alpha}
&\leq&\sum_{i\not=j}\mu^{\prime}\dfrac{(-S_{k-2;ij}S_l+S_kS_{l-2;ij})S_l^2}{S_l^4}u_{ij\alpha}u_{ij\alpha}\nonumber\\
&=&\frac{1}{k-l}\sum_{i,j=1}^{n}\left(\frac{S_k}{S_l}\right)^{\frac{1}{k-l}-1}\dfrac{-S_{k-2;ij}S_l+S_kS_{l-2;ij}}{S_l^2}u_{ij\alpha}^{2}.\label{j-**}
\end{eqnarray}

Since $f_{ij}=0, i\not=j$ at $x_0$, from \eqref{j-eq3} and \eqref{j-**}, we have that
\begin{eqnarray*}\sum_{i=1}^{n}f_{ii}u_{ii\alpha\alpha}&=&-\sum_{i,j=1}^{n}\sum_{r,s=1}^{n}f_{ii,rs}u_{ij\alpha}u_{rs\alpha}\\
&\geq&\frac{1}{k-l}\sum_{ij=1}^{n}\left(\frac{S_k}{S_l}\right)^{\frac{1}{k-l}-1}\frac{S_{k-2;ij}S_l-S_kS_{l-2;ij}}{S_l^2}u_{ij\alpha}^2.
\end{eqnarray*}
Let $\alpha=1$ and multiply $1/u_{11}$ on both sides in the above inequality, then
\begin{equation}\label{j-eq4}\sum_{i=1}^{n}f_{ii}\frac{u_{11 ii}}{u_{11}}
\geq\frac{1}{k-l}\sum_{i,j=1}^{n}\left(\frac{S_k}{S_l}\right)^{\frac{1}{k-l}-1}\frac{S_{k-2;ij}S_l-S_kS_{l-2;ij}}{S_l^2}\frac{u_{1 ij}^2}{u_{11}}.
\end{equation}
Let $\mathcal{L}$ denote the linearized operator of $f(D^2u)=1$ at $x_0$, then
$$\mathcal{L}=\sum_{i,j=1}^{n}f_{ij}(D^2u(x_0))D_{ij}.$$
Since the maximum of $\Phi$ is attained at $x_0$, by calculating, we can get at $x_0$ that
\begin{equation}\label{j-2}(\log\Phi)_i=\frac{4u_i}{u}+\frac{\phi_i}{\phi}+\frac{u_{11i}}{u_{11}}=0,\end{equation}
\begin{equation}\label{j-3}(\log\Phi)_{ii}=4\left(\frac{u_{ii}}{u}-\frac{u_i^2}{u^2}\right)+\frac{\phi_{ii}}{\phi}-\frac{\phi_i^2}{\phi^2}+\frac{u_{11ii}}{u_{11}}-\frac{u_{11i}^2}{u_{11}^2}\leq 0,\end{equation}
\begin{equation}\label{j-5}\phi_i=\phi^{\prime}\left(\frac{|Du|^2}{2}\right)u_iu_{ii},\end{equation}
\begin{equation}\label{j-2.7-1}\phi_{ii}=\phi^{\prime\prime}\left(\frac{|Du|^2}{2}\right)u_i^2u_{ii}^2+\phi^{\prime}\left(\frac{|Du|^2}{2}\right)\left(u_{ii}^2+\sum_{j=1}^nu_ju_{iij}\right),\end{equation}
where $u_i=\partial u/\partial x_i,i=1,\dots,n.$
So
\begin{eqnarray*}\mathcal{L}(\log\Phi)&=&\sum_{i,j=1}^{n}f_{ij}(D^2u)D_{ij}(\log\Phi)\\
&=&\sum_{i=1}^{n}f_{ii}(D^2u)\left[4\left(\frac{u_{ii}}{u}-\frac{u_i^2}{u^2}\right)+\frac{\phi_{ii}}{\phi}-\frac{\phi_i^2}{\phi^2}+\frac{u_{11ii}}{u_{11}}-\frac{u_{11i}^2}{u_{11}^2}\right]\\
&\leq &0,
\end{eqnarray*}
where we used the fact that $f_{ij}=0, i\not=j$ at $x_0$.
Substituting \eqref{j-5} and \eqref{j-2.7-1} into the above inequality, we have
\begin{equation*}
\sum_{i=1}^{n}f_{ii}\left[4\left(\frac{u_{ii}}{u}-\frac{u_i^2}{u^2}\right)+\frac{\phi^{\prime\prime}u_i^2u_{ii}^2}{\phi}+\frac{\phi^{\prime}u_{ii}^2}{\phi}+\frac{\phi^{\prime}\sum_{j=1}^{n}u_ju_{iij}}{\phi}
-\frac{\phi^{\prime 2}u_i^2u_{ii}^2}{\phi^2}+\frac{u_{11ii}}{u_{11}}-\frac{u_{11i}^2}{u_{11}^2}\right]\leq 0,
\end{equation*}
i.e.,
\begin{eqnarray}
&&\sum_{i=1}^{n}f_{ii}\frac{\phi^{\prime}\sum_{j=1}^{n}u_ju_{iij}}{\phi}
+\sum_{i=1}^{n}f_{ii}\frac{u_{11ii}}{u_{11}}\nonumber\\
+&&\sum_{i=1}^{n}f_{ii}\left[4\left(\frac{u_{ii}}{u}-\frac{u_i^2}{u^2}\right)+\frac{\phi^{\prime\prime}u_i^2u_{ii}^2}{\phi}+\frac{\phi^{\prime}u_{ii}^2}{\phi}-\frac{\phi^{\prime 2}u_i^2u_{ii}^2}{\phi^2}-\frac{u_{11i}^2}{u_{11}^2}\right]\leq 0.\label{j-2.14-1}
\end{eqnarray}
Similar to \eqref{j-*}, we know that for $j=1,\dots,n$,
$$\sum_{i=1}^{n}f_{ii}u_{iij}=0.$$
So
\begin{equation*}\sum_{i=1}^{n}f_{ii}\frac{\phi^{\prime}\sum_{j=1}^{n}u_ju_{iij}}{\phi}
=\frac{\phi^{\prime}}{\phi}\sum_{i,j=1}^{n}u_j\left[\frac{u_{jt}}{(k-l)u_t}+(-u_t)^{\frac{1}{k-l}}f_{ii}u_{iij}\right]=0.
\end{equation*}
Consequently, by \eqref{j-2.14-1} and \eqref{j-eq4},
\begin{eqnarray}
&&\sum_{i=1}^{n}f_{ii}\left[4\left(\frac{u_{ii}}{u}-\frac{u_i^2}{u^2}\right)+\frac{\phi^{\prime\prime}u_i^2u_{ii}^2}{\phi}+\frac{\phi^{\prime}u_{ii}^2}{\phi}-\frac{\phi^{\prime 2}u_i^2u_{ii}^2}{\phi^2}-\frac{u_{11i}^2}{u_{11}^2}\right]\nonumber\\
&&+\frac{1}{k-l}\sum_{i,j=1}^n\left(\frac{S_k}{S_l}\right)^{\frac{1}{k-l}-1}\frac{S_{k-2;ij}S_l-S_kS_{l-2;ij}}{S_l^2}\frac{u_{1ij}^2}{u_{11}}
\label{j-6}\leq 0.
\end{eqnarray}

From now on, we discuss the case of $k=n$. Substituting
$$\frac{u_{111}}{u_{11}}=-\left(\frac{\phi_1}{\phi}+\frac{4u_1}{u}\right),\frac{u_i}{u}=-\frac{1}{4}\left(\frac{\phi_i}{\phi}+\frac{u_{11i}}{u_{11}}\right),i=1,\dots,n$$
into \eqref{j-6}, we have
\begin{eqnarray}
0&\geq & f_{11}\left[4\left(\frac{u_{11}}{u}-\frac{u_1^2}{u^2}\right)+\frac{\phi^{\prime\prime}}{\phi}u_1^2u_{11}^2+\frac{\phi^{\prime}}{\phi}u_{11}^2-\frac{\phi^{\prime 2}}{\phi^2}u_1^2u_{11}^2-\left(\frac{\phi_1}{\phi}+\frac{4u_1}{u}\right)^2\right]\nonumber\\
&&+\sum_{i=2}^nf_{ii}\left[\frac{4u_{ii}}{u}-\frac{1}{4}\left(\frac{\phi_i}{\phi}+\frac{u_{11i}}{u_{11}}\right)^2+\frac{\phi^{\prime\prime}}{\phi}u_i^2u_{ii}^2+\frac{\phi^\prime}{\phi}u_{ii}^2
-\frac{\phi^{\prime 2}}{\phi^2}u_i^2u_{ii}^2-\frac{u_{11i}^2}{u_{11}^2}\right]\nonumber\\
&&+\frac{1}{n-l}\sum_{i,j=1}^n\left(\frac{S_n}{S_l}\right)^{\frac{1}{n-l}-1}\frac{S_{n-2;ij}S_l-S_nS_{l-2;ij}}{S_l^2}\frac{u_{1ij}^2}{u_{11}}\nonumber\\
&\geq &\left\{\sum_{i=1}^nf_{ii}\left[\frac{4u_{ii}}{u}+\left(\frac{\phi^{\prime\prime}}{\phi}-\frac{3\phi^{\prime 2}}{\phi^2}\right)u_i^2u_{ii}^2+\frac{\phi^{\prime}}{\phi}u_{ii}^2\right]
-36f_{11}\frac{u_1^2}{u^2}\right\}\nonumber\\
&&+\left[-\frac{3}{2}\sum_{i=2}^nf_{ii}\frac{u_{11i}^2}{u_{11}^2}
+\frac{1}{n-l}\sum_{i,j=1}^n\left(\frac{S_n}{S_l}\right)^{\frac{1}{n-l}-1}\frac{S_{n-2;ij}S_l-S_nS_{l-2;ij}}{S_l^2}\frac{u_{1ij}^2}{u_{11}}\right]\nonumber\\
\label{j-16-1}&=:&I_1+I_2
\end{eqnarray}
 where
$$I_1=\sum_{i=1}^nf_{ii}\left[\frac{4u_{ii}}{u}+\left(\frac{\phi^{\prime\prime}}{\phi}-\frac{3\phi^{\prime 2}}{\phi^2}\right)u_i^2u_{ii}^2+\frac{\phi^{\prime}}{\phi}u_{ii}^2\right]
-36f_{11}\frac{u_1^2}{u^2},$$
$$I_2=-\frac{3}{2}\sum_{i=2}^nf_{ii}\frac{u_{11i}^2}{u_{11}^2}
+\frac{1}{n-l}\sum_{i,j=1}^n\left(\frac{S_n}{S_l}\right)^{\frac{1}{n-l}-1}\frac{S_{n-2;ij}S_l-S_nS_{l-2;ij}}{S_l^2}\frac{u_{1ij}^2}{u_{11}},$$
and we used \eqref{j-5} and the inequalities
$$\left(\frac{\phi_1}{\phi}+\frac{4u_1}{u}\right)^2\leq 2\frac{\phi_1^2}{\phi^2}+\frac{32u_1^2}{u^2},$$
and for $i=1,\dots,n$,
\begin{eqnarray*}\frac{1}{4}\left(\frac{\phi_i}{\phi}+\frac{u_{11i}}{u_{11}}\right)^2&\leq& \frac{1}{2}\frac{\phi_i^2}{\phi^2}+\frac{1}{2}\frac{u_{11i}^2}{u_{11}^2}\\
&\leq &2\frac{\phi_i^2}{\phi^2}+\frac{1}{2}\frac{u_{11i}^2}{u_{11}^2}.
\end{eqnarray*}

Due to \eqref{ele-1}, then
\begin{eqnarray}
\sum_{i=1}^{n}f_{ii}u_{ii}&=&\sum_{i=1}^{n}\mu^{\prime}\frac{S_{n-1;i}S_l-S_nS_{l-1;i}}{S_l^2}\lambda_i\nonumber\\
&=&\frac{\mu^{\prime}}{S_l^2}\left(\sum_{i=1}^{n}S_l S_{n-1;i}\lambda_i-\sum_{i=1}^{n}S_nS_{l-1;i}\lambda_i\right)\nonumber
\end{eqnarray}
\begin{eqnarray}
&=&\frac{\mu^{\prime}}{S_l^2}(nS_nS_l-lS_nS_l)\nonumber\\
\label{j-e2}&=&(n-l)\mu^{\prime}\frac{S_n}{S_l}=\left(\frac{S_n}{S_l}\right)^{\frac{1}{n-l}}=f(D^2u)=1.
\end{eqnarray}
By \eqref{2.16-1}, we have
\begin{equation}\label{j-2.18-1}
f_{ii}=\mu^{\prime}\frac{S_{n-1;i}S_l-S_nS_{l-1;i}}{S_l^2}=\mu^{\prime}\frac{S_{n-1;i}S_{l;i}}{S_l^2}.
\end{equation}
By \eqref{j-e2} and $\frac{\phi^{\prime\prime}}{\phi}-\frac{3\phi^{\prime 2}}{\phi^2}\geq 0$ and $\phi^{\prime}\geq 0,$
then $I_1$ can be estimated
\begin{eqnarray}
I_1&\geq& \frac{4}{u}+\theta_8f_{11}u_{11}^2-C_2\frac{f_{11}}{u^2},
\end{eqnarray}
where $\theta_8=\theta_8(\phi)$ and $C_2=C_2(||u||_{C^1(D)})$. We can assume that $\Phi(x_0)$ is large such that
\begin{equation}\label{j-16-3}u(x_0)^2u_{11}(x_0)^2\geq \frac{2C_2}{\theta_8},\end{equation}
as otherwise, $\Phi(x_0)\leq C,$ then \eqref{sec-est} is obvious.
So in the following, we always assume that \eqref{j-16-3} holds. Then
\begin{eqnarray}
I_1&\geq & \frac{1}{2}\theta_8f_{11}u_{11}^2+\frac{4}{u}.\label{j-16-2}
\end{eqnarray}

 $I_2$ can be estimated by \eqref{j-2.18-1},
\begin{eqnarray}
I_2&\geq &-\frac{3}{2}\frac{1}{n-l}\left(\frac{S_n}{S_l}\right)^{\frac{1}{n-l}-1}\sum_{i=2}^n\frac{S_{n-1;i}S_l-S_nS_{l-1;i}}{S_l^2}\frac{u_{11i}^2}{u_{11}^2}\nonumber\\
&&+2\frac{1}{n-l}\left(\frac{S_n}{S_l}\right)^{\frac{1}{n-l}-1}\sum_{i=2}^n\frac{S_{n-2;1i}S_l-S_nS_{l-2;1i}}{S_l^2}\frac{u_{11i}^2}{u_{11}}\nonumber\\
&=&2\frac{1}{n-l}\left(\frac{S_n}{S_l}\right)^{\frac{1}{n-l}-1}\nonumber\\
&&\sum_{i=2}^n\left(\frac{S_{n-2;1i}S_l-S_nS_{l-2;1i}}{S_l^2}-\frac{3}{4}\frac{S_{n-1;i}S_l-S_nS_{l-1;i}}{\lambda_1S_l^2}\right)\frac{u_{11i}^2}{\lambda_1}.\label{j-2.27*}
\end{eqnarray}
Due to \eqref{2.15-1}, we have
$$S_{l-1;i}=S_{l-2;1i}\lambda_1+S_{l-1;1i},$$
and
$$S_{n-1;i}=S_{n-2;1i}\lambda_1+S_{n-1;1i}.$$
So
\begin{equation}\label{j-2.27-2} S_{l-2;1i}\lambda_1=S_{l-1;i}-S_{l-1;1i},\end{equation}
and by $S_{n-1;1i}=0$, we have
\begin{equation}\label{j-2.27-1}S_{n-2;1i}\lambda_1=S_{n-1;i}.\end{equation}
Then by \eqref{j-2.27-1}, \eqref{j-2.27-2} and \eqref{2.16-1},
\begin{eqnarray*}
&&\lambda_1(S_{n-2;1i}S_l-S_nS_{l-2;1i})\\
&=&\lambda_1S_{n-2;1i}S_l-S_nS_{l-2;1i}\lambda_1\\
&=&S_lS_{n-1;i}-S_n(S_{l-1;i}-S_{l-1;1i})\\
&=&S_lS_{n-1;i}-S_nS_{l-1;i}+S_nS_{l-1;1i}\\
&=&S_{n-1;i}S_{l;i}+S_nS_{l-1;1i}.
\end{eqnarray*}
Therefore, by \eqref{2.16-1},
\begin{eqnarray*}
&&4(S_{n-1;i}S_{l;i}+S_nS_{l-1;1i})-3(S_{n-1;i}S_{l}-S_nS_{l-1;i})\\
&=&4(S_{n-1;i}S_{l;i}+S_nS_{l-1;1i})-3S_{n-1;i}S_{l;i}\\
&=&S_{n-1;i}S_{l;i}+4S_nS_{l-1;1i}>0.
\end{eqnarray*}
Thus
\begin{equation}\label{ell-1}\frac{S_{n-2;1i}S_l-S_nS_{l-2;1i}}{S_l^2}-\frac{3}{4}\frac{S_{n-1;i}S_l-S_nS_{l-1;i}}{\lambda_1S_l^2}>0.\end{equation}
Then by \eqref{j-2.27*}
\begin{equation}\label{j-18-1}I_2\geq 0.\end{equation}
By \eqref{j-16-2} and \eqref{j-18-1}, then \eqref{j-16-1} can be converted to
\begin{equation}\label{j-2.28-1} 0\geq \frac{1}{2}\theta_8f_{11}u_{11}^2+\frac{4}{u}.\end{equation}
Multiplying $(-u)^4\phi$ on both sides in \eqref{j-2.28-1}, we have
\begin{equation}\label{j-18-2}
0\geq \frac{1}{2}\theta_8f_{11}u_{11}^2(-u)^4\phi-4(-u)^3\phi.
\end{equation}

According to \cite{chen}, we can know that
\begin{equation}\label{j-f11-1}f_{11}\geq \sum_{i=1}^nf_{ii},
\end{equation}
for $k=n$.
By the proof of Lemma 3.10 in \cite{chen}, we know that
$$\sum_{i=1}^n f_{ii}\geq \mu^{\prime}\frac{k-l}{k}(n-k+1)\frac{S_{k-1}}{S_l}.$$
So for $k=n$, then
\begin{equation}\label{j-f11-2}\sum_{i=1}^n f_{ii}\geq \mu^{\prime}\frac{n-l}{n}\frac{S_{n-1}}{S_l}.\end{equation}
So by \eqref{j-f11-1} and \eqref{j-f11-2},
\begin{equation}\label{j-f11-3}f_{11}u_{11}^2\geq \sum_{i=1}^n f_{ii}u_{11}^2\geq \mu^{\prime}\frac{n-l}{n}\frac{S_{n-1}}{S_l}u_{11}^2.\end{equation}


Since $S_{n-1}=S_{n-2;1}\lambda_1+S_{n-1;1}$, then
\begin{eqnarray*}
\lambda_1S_{n-1}&=&\lambda_1^2S_{n-2;1}+\lambda_1 S_{n-1;1}\\
&=&\lambda_1^2S_{n-2;1}+S_{n}\\
&\geq& S_n.
\end{eqnarray*}
Due to \eqref{j-f11-3},
\begin{eqnarray}\label{j-f11-4}f_{11}u_{11}^2&\geq& \mu^{\prime}\frac{n-l}{n}\frac{\lambda_1S_{n-1}}{S_l}u_{11}\nonumber\\
&\geq &\mu^{\prime}\frac{n-l}{n}\frac{S_{n}}{S_l}u_{11}\nonumber\\
&=&\frac{1}{n}u_{11}.
\end{eqnarray}

Consequently, by \eqref{j-18-2}, we have that
$$0\geq \frac{\theta_8}{2n}(-u)^4\phi u_{11}-4(-u)^3\phi.$$
Thus
\begin{eqnarray*}
\Phi=(-u)^4\phi u_{11}&\leq& \frac{8n(-u)^3\phi}{\theta_8}\\
&\leq & C_0(n,||u||_{C^1(D)}).
\end{eqnarray*}
Then $(-u)^4|D^2u|$ can be estimated.
\end{proof}

\section{Gradient estimate and Pogorelov type results}

In this section, we give the proof of the interior gradient estimate and Pogorelov type estimate for parabolic Hessian quotient equation.

\begin{proof}[Proof of Theorem \ref{gradient}]
For $(x,t)\in \o{\Omega}\subset B_r\times(-\infty,0]$ and $|\xi|=1$, let
$$\Psi(x,t;\xi)=\varphi(u(x,t))\omega(x)u_{\xi}(x,t),$$
where $\omega(x)=1-|x|^2/r^2,\varphi(\tau)=(M-\tau)^{-\frac{1}{2}}$ and $M=3\mbox{osc}_\Omega u+\sup_{\Omega}u.$  Suppose that
$$\Psi(x_0,t_0;\xi_0)=\max\{\Psi(x,t;\xi)|(x,t)\in \o{\Omega},|\xi|=1\}. $$
 Obviously, $(x_0,t_0)\in \o{\Omega}\backslash \partial_p\Omega.$ By rotating the coordinates $(x_1,\dots,x_n)$, we may assume that $\xi_0=e_1=(1,0,\dots,0)$. Clearly, $u_i(x_0,t_0)=0,i=2,\dots,n.$ Then
$$\Psi(x,t)=\varphi(u(x,t))\omega(x)u_{1}(x,t)$$
attains its maximum at $(x_0,t_0)$ and
$$\psi(x,t)=\log\Psi(x,t)=\log\varphi+\log\omega+\log u_{1}$$
attains its maximum at $(x_0,t_0)$. By calculation, we have for $i,j=1,2,\dots,n$,
\begin{equation}\label{1-1}
0=\psi_i=(\log \Psi)_i=\frac{\varphi_i}{\varphi}+\frac{\omega_i}{\omega}+\frac{u_{1i}}{u_1}.
\end{equation}
\begin{equation}\label{psiij}
\psi_{ij}=\frac{\varphi_{ij}}{\varphi}-\frac{\varphi_i\varphi_j}{\varphi^2}+\frac{\omega_{ij}}{\omega}-\frac{\omega_i\omega_j}{\omega^2}+\frac{u_{1ij}}{u_1}-\frac{u_{1i}u_{1j}}{u_1^2}.
\end{equation}
\begin{equation}\label{1-2}
0\leq \psi_t=(\log \Psi)_t=\frac{\varphi^{\prime}}{\varphi}u_t+\frac{u_{1t}}{u_1}.
\end{equation}
Set
\begin{equation}\label{3-1}F(D^2u)=\frac{S_k(D^2u)}{S_l(D^2u)}.\end{equation}
Then
\begin{equation}\label{F-1}-u_tF(D^2u)=1.\end{equation}
Let $L$ be the linearized operator of \eqref{3-1} at $(x_0,t_0)$. Then
$$L=\frac{1}{u_t(x_0,t_0)}D_t-u_t(x_0,t_0)\sum_{i,j=1}^{n}F_{ij}(D^2u(x_0,t_0))D_{ij},$$
where $$F_{ij}=\frac{\partial F}{\partial u_{ij}}(D^2u).$$
Then at $(x_0,t_0)$, according to \eqref{1-1} and \eqref{1-2}, we have that
\begin{align}\label{F-4}
0&\geq L\psi\nonumber\\
&=\frac{1}{u_t}D_t\psi-u_t\sum_{i,j=1}^{n}F_{ij}D_{ij}\psi\nonumber\\
&=\frac{1}{u_t}\left(\frac{\varphi^{\prime}}{\varphi}u_t+\frac{u_{1t}}{u_1}\right)
-u_t\sum_{i,j=1}^{n}F_{ij}\left(\frac{u_{1ij}}{u_1}-\frac{u_{1i}u_{1j}}{u_1^2}+\frac{\varphi_{ij}}{\varphi}-\frac{\varphi_i\varphi_j}{\varphi^2}+\frac{\omega_{ij}}{\omega}-\frac{\omega_i\omega_j}{\omega^2}\right)\nonumber\\
&=\frac{1}{u_1}\frac{u_{1t}}{u_t}-\frac{1}{u_1}u_t\sum_{i,j=1}^{n}F_{ij}u_{1ij}+\frac{\varphi^{\prime}}{\varphi}+u_t\sum_{i,j=1}^{n}F_{ij}\left(\frac{u_{1i}u_{1j}}{u_1^2}-\frac{\varphi_{ij}}{\varphi}+\frac{\varphi_i\varphi_j}{\varphi^2}-\frac{\omega_{ij}}{\omega}+\frac{\omega_i\omega_j}{\omega^2}\right).
\end{align}
From \eqref{F-1}, we have
\begin{equation}\label{F-2}
F(D^2u)=-\frac{1}{u_t}.
\end{equation}
By differentiating \eqref{F-1} on both sides with respect to $x_1$, we have
$$-u_{1t}F(D^2u)-u_t\sum_{i,j=1}^{n}F_{ij}(D^2u)u_{1ij}=0.$$
Due to \eqref{F-2}, we have
\begin{equation}\label{F-3}
\frac{u_{1t}}{u_t}-u_t\sum_{i,j=1}^{n}F_{ij}u_{1ij}=0.
\end{equation}
As $\varphi^{\prime}/\varphi\geq 0$, by \eqref{F-4} and \eqref{F-3}, we know
\begin{align*}
u_t\sum_{i,j=1}^{n}F_{ij}\left(\frac{u_{1i}u_{1j}}{u_1^2}-\frac{\varphi_{ij}}{\varphi}+\frac{\varphi_i\varphi_j}{\varphi^2}-\frac{\omega_{ij}}{\omega}+\frac{\omega_i\omega_j}{\omega^2}\right)\leq 0 .
\end{align*}
Since $-u_t\geq m_1$, the remainders of the proof is the same as that of \cite{chen}. Here we omit the proof.

\end{proof}

\begin{proof}[Proof of Theorem \ref{thm-ns}]
Let $G=2\sup_{(x,t)\in \Omega}|Du(x,t)|^2$ and $\phi(z)=(1-z/G)^{-1/8}.$ Define the function
$$\Phi(x,t;\eta)=(-u(x,t))^4\phi\left(\frac{|Du(x,t)|^2}{2}\right)D_{\eta\eta}u(x,t),\ (x,t)\in \o{\Omega},|\eta|=1.$$
Suppose that
$\Phi(x_0,t_0;\eta_0)=\max\{\Phi(x,t;\eta)|(x,t)\in \o{\Omega},|\eta|=1\}.$
Since $u=0$ on $\partial_p\Omega$, then the point $(x_0,t_0)$ lies in $\o{\Omega}\b\partial_p\Omega.$ Without loss of generality, we may assume that $D^2u(x_0,t_0)=\mbox{diag}(u_{11}(x_0,t_0),u_{22}(x_0,t_0),\dots,u_{nn}(x_0,t_0))$ with $u_{11}(x_0,t_0)\geq u_{22}(x_0,t_0)\geq\dots\geq u_{nn}(x_0,t_0)>0$ and $\eta_0=e_1$, where $u_{ii}=\partial^2 u/\partial x_i^2$.
Then $$\Phi=\Phi(x,t;e_1)=(-u(x,t))^4\phi\left(\frac{|Du(x,t)|^2}{2}\right)u_{11}(x,t)$$
and its maximum is attained at $(x_0,t_0)$ and the eigenvalues of $D^2u(x_0,t_0)$ are $\lambda=(\lambda_1,\dots,\lambda_n)=(u_{11}(x_0,t_0),\dots,u_{nn}(x_0,t_0)).$ It is adequate to consider $\lambda_1=u_{11}(x_0,t_0)\geq 1.$

 Set
$$f(D^2u)=\left(\frac{S_k(D^2u)}{S_{l}(D^2u)}\right)^{\frac{1}{k-l}},$$
then $u$ satisfies
\begin{equation}\label{eq1}(-u_t)^{\frac{1}{k-l}}f(D^2u)=1\ \ \mbox{in}\ \ \Omega.\end{equation}
By differentiating \eqref{eq1} with respect to $x_\alpha$, we have that
\begin{equation}\label{*}\frac{1}{k-l}(-u_t)^{\frac{1}{k-l}-1}(-u_{\alpha t})f(D^2u)+(-u_t)^{\frac{1}{k-l}}\sum_{i,j=1}^{n}f_{ij}u_{ij\alpha}=0,\end{equation}
where $f_{ij}=f_{ij}(D^2u)=\frac{\partial f(D^2u)}{\partial u_{ij}}.$
Utilizing \eqref{eq1}, we can get that $f(D^2u)=(-u_t)^{-\frac{1}{k-l}}$, so substitute it into \eqref{*}, then
\begin{equation}\label{1}-\frac{1}{k-l}(-u_t)^{-1}u_{\alpha t}+(-u_t)^{\frac{1}{k-l}}\sum_{i,j=1}^{n}f_{ij}u_{ij\alpha}=0.\end{equation}
Multiplying $(-u_t)^{-\frac{1}{k-l}}$ on both sides of \eqref{1}, we have
\begin{equation}\label{eq2}-\frac{1}{k-l}(-u_t)^{-(\frac{1}{k-l}+1)}u_{\alpha t}+\sum_{i,j=1}^{n}f_{ij}u_{ij\alpha}=0.\end{equation}
After differentiating \eqref{eq2} for $x_{\alpha}$ again, we can get that
\begin{align*}&\frac{1}{k-l}\left(-\frac{1}{k-l}-1\right)(-u_t)^{-\frac{1}{k-l}-2}u_{\alpha t}^2-\frac{1}{k-l}(-u_t)^{-\frac{1}{k-l}-1}u_{\alpha\alpha t}\\
&+\sum_{i,j=1}^{n}\sum_{r,s=1}^{n}f_{ij,rs}u_{ij\alpha}u_{rs\alpha}+\sum_{i,j=1}^{n}f_{ij}u_{ij\alpha\alpha}=0.\end{align*}
Multiplying $(-u_t)^{\frac{1}{k-l}}$ on both sides of the above equality,
\begin{eqnarray*}
&-\dfrac{1}{k-l}\left(\dfrac{1}{k-l}+1\right)(-u_t)^{-2}u_{\alpha t}^2-\dfrac{1}{k-l}(-u_t)^{-1}u_{\alpha\alpha t}\\
&+(-u_t)^{\frac{1}{k-l}}\sum_{i,j=1}^{n}\sum_{r,s=1}^{n}f_{ij,rs}u_{ij\alpha}u_{rs\alpha}+(-u_t)^{\frac{1}{k-l}}\sum_{i,j=1}^{n}f_{ij}u_{ij\alpha\alpha}=0,
\end{eqnarray*}
i.e.,
\begin{eqnarray}
&-\left(\dfrac{1}{k-l}+1\right)\dfrac{u_{\alpha t}^2}{(k-l)u_t^{2}}+\dfrac{u_{\alpha\alpha t}}{(k-l)u_t}\nonumber\\
&+(-u_t)^{\frac{1}{k-l}}\sum_{i,j=1}^{n}\sum_{r,s=1}^{n}f_{ij,rs}u_{ij\alpha}u_{rs\alpha}+(-u_t)^{\frac{1}{k-l}}\sum_{i,j=1}^{n}f_{ij}u_{ij\alpha\alpha}=0\label{eq3}.
\end{eqnarray}

Let $\mu=\mu(t)=t^{\frac{1}{k-l}}$, then
 $$f(D^2u)=\mu\left(\frac{S_k(D^2u)}{S_l(D^2u)}\right).$$
 The following calculations are all at $(x_0,t_0)$, unless otherwise stated. Since $f_{ij}=0, i\not=j$ at $(x_0,t_0)$, from \eqref{eq3} and \eqref{j-**}, we have that
\begin{eqnarray*}\frac{u_{\alpha\alpha t}}{(k-l)u_t}+(-u_t)^{\frac{1}{k-l}}\sum_{i=1}^{n}f_{ii}u_{ii\alpha\alpha}&=&\left(\frac{1}{k-l}+1\right)\frac{u_{\alpha t}^2}{(k-l)u_t^2}-(-u_t)^{\frac{1}{k-l}}\sum_{i,j=1}^{n}\sum_{r,s=1}^{n}f_{ii,rs}u_{ij\alpha}u_{rs\alpha}\\
&\geq&(-u_t)^{\frac{1}{k-l}}\frac{1}{k-l}\sum_{i,j=1}^{n}\left(\frac{S_k}{S_l}\right)^{\frac{1}{k-l}-1}\frac{S_{k-2;ij}S_l-S_kS_{l-2;ij}}{S_l^2}u_{ij\alpha}^2.
\end{eqnarray*}
Let $\alpha=1$ and multiply $1/u_{11}$ on both sides in the above inequality, then
\begin{eqnarray}&&\frac{u_{11 t}}{(k-l)u_tu_{11}}+(-u_t)^{\frac{1}{k-l}}\sum_{i=1}^{n}f_{ii}\frac{u_{11 ii}}{u_{11}}\nonumber\\
\label{eq4}&\geq&(-u_t)^{\frac{1}{k-l}}\frac{1}{k-l}\sum_{i,j=1}^{n}\left(\frac{S_k}{S_l}\right)^{\frac{1}{k-l}-1}\frac{S_{k-2;ij}S_l-S_kS_{l-2;ij}}{S_l^2}\frac{u_{1 ij}^2}{u_{11}}.
\end{eqnarray}
Let $\mathcal{L}$ denote the linearized operator of $(-u_t)^{\frac{1}{k-l}}f(D^2u)=1$ at $(x_0,t_0)$, then by \eqref{1},
$$\mathcal{L}=\frac{1}{(k-l)u_t(x_0,t_0)}D_t+(-u_t(x_0,t_0))^{\frac{1}{k-l}}\sum_{i,j=1}^{n}f_{ij}(D^2u(x_0,t_0))D_{ij}.$$
Since the maximum of $\Phi$ is attained at $(x_0,t_0)$, by calculating, we can get at $(x_0,t_0)$ that
\begin{equation}\label{2}(\log\Phi)_i=\frac{4u_i}{u}+\frac{\phi_i}{\phi}+\frac{u_{11i}}{u_{11}}=0,\end{equation}
\begin{equation}\label{3}(\log\Phi)_{ii}=4\left(\frac{u_{ii}}{u}-\frac{u_i^2}{u^2}\right)+\frac{\phi_{ii}}{\phi}-\frac{\phi_i^2}{\phi^2}+\frac{u_{11ii}}{u_{11}}-\frac{u_{11i}^2}{u_{11}^2}\leq 0,\end{equation}
\begin{equation}\label{4}(\log\Phi)_t=\frac{4u_t}{u}+\frac{\phi_t}{\phi}+\frac{u_{11t}}{u_{11}}\geq 0,\end{equation}
\begin{equation}\label{5}\phi_i=\phi^{\prime}\left(\frac{|Du|^2}{2}\right)u_iu_{ii},\end{equation}
\begin{equation}\label{2.7-1}\phi_{ii}=\phi^{\prime\prime}\left(\frac{|Du|^2}{2}\right)u_i^2u_{ii}^2+\phi^{\prime}\left(\frac{|Du|^2}{2}\right)\left(u_{ii}^2+\sum_{j=1}^nu_ju_{iij}\right),\end{equation}
\begin{equation}\label{2.7-2}\phi_{t}=\phi^{\prime}\left(\frac{|Du|^2}{2}\right)\sum_{j=1}^nu_ju_{jt},\end{equation}
where $u_i=\partial u/\partial x_i,i=1,\dots,n.$
So
\begin{eqnarray*}\mathcal{L}(\log\Phi)&=&\frac{1}{(k-l)u_t}D_t(\log\Phi)+(-u_t)^{\frac{1}{k-l}}\sum_{i,j=1}^{n}f_{ij}(D^2u)D_{ij}(\log\Phi)\\
&=&\frac{1}{(k-l)u_t}\left(\frac{4u_t}{u}+\frac{\phi_t}{\phi}+\frac{u_{11t}}{u_{11}}\right)\\
&&+(-u_t)^{\frac{1}{k-l}}\sum_{i=1}^{n}f_{ii}(D^2u)\left[4\left(\frac{u_{ii}}{u}-\frac{u_i^2}{u^2}\right)+\frac{\phi_{ii}}{\phi}-\frac{\phi_i^2}{\phi^2}+\frac{u_{11ii}}{u_{11}}-\frac{u_{11i}^2}{u_{11}^2}\right]\\
&\leq &0\ \ \mbox{at}\ \ (x_0,t_0),
\end{eqnarray*}
where we used the fact that $f_{ij}=0, i\not=j$ at $(x_0,t_0)$.
Substituting \eqref{5}, \eqref{2.7-1} and \eqref{2.7-2} into the above inequality, we have
\begin{eqnarray*}
&&\frac{4}{(k-l)u}-\frac{1}{k-l}(-u_t)^{-1}\frac{\phi^{\prime}\sum_{j=1}^nu_ju_{jt}}{\phi}+\frac{u_{11t}}{(k-l)u_tu_{11}}\\
+&&(-u_t)^{\frac{1}{k-l}}\sum_{i=1}^{n}f_{ii}\left[4\left(\frac{u_{ii}}{u}-\frac{u_i^2}{u^2}\right)+\frac{\phi^{\prime\prime}u_i^2u_{ii}^2}{\phi}+\frac{\phi^{\prime}u_{ii}^2}{\phi}+\frac{\phi^{\prime}\sum_{j=1}^{n}u_ju_{iij}}{\phi}\right.\\
&&\left.-\frac{\phi^{\prime 2}u_i^2u_{ii}^2}{\phi^2}+\frac{u_{11ii}}{u_{11}}-\frac{u_{11i}^2}{u_{11}^2}\right]\leq 0,
\end{eqnarray*}
i.e.,
\begin{eqnarray}
&&\frac{4}{(k-l)u}-\frac{1}{k-l}(-u_t)^{-1}\frac{\phi^{\prime}\sum_{j=1}^nu_ju_{jt}}{\phi}+(-u_t)^{\frac{1}{k-l}}\sum_{i=1}^{n}f_{ii}\frac{\phi^{\prime}\sum_{j=1}^{n}u_ju_{iij}}{\phi}\nonumber\\
+&&\frac{u_{11t}}{(k-l)u_tu_{11}}+(-u_t)^{\frac{1}{k-l}}\sum_{i=1}^{n}f_{ii}\frac{u_{11ii}}{u_{11}}\nonumber\\
+&&(-u_t)^{\frac{1}{k-l}}\sum_{i=1}^{n}f_{ii}\left[4\left(\frac{u_{ii}}{u}-\frac{u_i^2}{u^2}\right)+\frac{\phi^{\prime\prime}u_i^2u_{ii}^2}{\phi}+\frac{\phi^{\prime}u_{ii}^2}{\phi}-\frac{\phi^{\prime 2}u_i^2u_{ii}^2}{\phi^2}-\frac{u_{11i}^2}{u_{11}^2}\right]\leq 0.\label{2.14-1}
\end{eqnarray}
Similar to \eqref{1}, we know that
$$-\frac{1}{k-l}(-u_t)^{-1}u_{j t}+(-u_t)^{\frac{1}{k-l}}\sum_{i=1}^{n}f_{ii}u_{iij}=0\ \mbox{at}\ (x_0,t_0).$$
So
\begin{eqnarray*}&&-\frac{1}{k-l}(-u_t)^{-1}\frac{\phi^{\prime}\sum_{j=1}^nu_ju_{jt}}{\phi}+(-u_t)^{\frac{1}{k-l}}\sum_{i=1}^{n}f_{ii}\frac{\phi^{\prime}\sum_{j=1}^{n}u_ju_{iij}}{\phi}\\
&=&\frac{\phi^{\prime}}{\phi}\sum_{i,j=1}^{n}u_j\left[\frac{u_{jt}}{(k-l)u_t}+(-u_t)^{\frac{1}{k-l}}f_{ii}u_{iij}\right]=0\ \mbox{at}\ (x_0,t_0).
\end{eqnarray*}
Consequently, by \eqref{2.14-1} and \eqref{eq4},
\begin{eqnarray}
&&(-u_t)^{\frac{1}{k-l}}\sum_{i=1}^{n}f_{ii}\left[4\left(\frac{u_{ii}}{u}-\frac{u_i^2}{u^2}\right)+\frac{\phi^{\prime\prime}u_i^2u_{ii}^2}{\phi}+\frac{\phi^{\prime}u_{ii}^2}{\phi}-\frac{\phi^{\prime 2}u_i^2u_{ii}^2}{\phi^2}-\frac{u_{11i}^2}{u_{11}^2}\right]\nonumber\\
&&+(-u_t)^{\frac{1}{k-l}}\frac{1}{k-l}\sum_{i,j=1}^n\left(\frac{S_k}{S_l}\right)^{\frac{1}{k-l}-1}\frac{S_{k-2;ij}S_l-S_kS_{l-2;ij}}{S_l^2}\frac{u_{1ij}^2}{u_{11}}+\frac{4}{(k-l)u}\nonumber\\
\label{6}&\leq& 0.
\end{eqnarray}

From now on, we discuss the case of $k=n$.
Substituting
$$\frac{u_{111}}{u_{11}}=-\left(\frac{\phi_1}{\phi}+\frac{4u_1}{u}\right),\frac{u_i}{u}=-\frac{1}{4}\left(\frac{\phi_i}{\phi}+\frac{u_{11i}}{u_{11}}\right)$$
into \eqref{6}, we have
\begin{eqnarray}
0&\geq & (-u_t)^{\frac{1}{n-l}}f_{11}\left[4\left(\frac{u_{11}}{u}-\frac{u_1^2}{u^2}\right)+\frac{\phi^{\prime\prime}}{\phi}u_1^2u_{11}^2+\frac{\phi^{\prime}}{\phi}u_{11}^2-\frac{\phi^{\prime 2}}{\phi^2}u_1^2u_{11}^2-\left(\frac{\phi_1}{\phi}+\frac{4u_1}{u}\right)^2\right]\nonumber\\
&&+(-u_t)^{\frac{1}{n-l}}\sum_{i=2}^nf_{ii}\left[\frac{4u_{ii}}{u}-\frac{1}{4}\left(\frac{\phi_i}{\phi}+\frac{u_{11i}}{u_{11}}\right)^2+\frac{\phi^{\prime\prime}}{\phi}u_i^2u_{ii}^2+\frac{\phi^\prime}{\phi}u_{ii}^2
-\frac{\phi^{\prime 2}}{\phi^2}u_i^2u_{ii}^2-\frac{u_{11i}^2}{u_{11}^2}\right]\nonumber\\
&&+(-u_t)^{\frac{1}{n-l}}\frac{1}{n-l}\sum_{i,j=1}^n\left(\frac{S_n}{S_l}\right)^{\frac{1}{n-l}-1}\frac{S_{n-2;ij}S_l-S_nS_{l-2;ij}}{S_l^2}\frac{u_{1ij}^2}{u_{11}}+\frac{4}{(n-l)u}\nonumber\\
&\geq &\left\{(-u_t)^{\frac{1}{n-l}}\sum_{i=1}^nf_{ii}\left[\frac{4u_{ii}}{u}+\left(\frac{\phi^{\prime\prime}}{\phi}-\frac{3\phi^{\prime 2}}{\phi^2}\right)u_i^2u_{ii}^2+\frac{\phi^{\prime}}{\phi}u_{ii}^2\right]
-36(-u_t)^{\frac{1}{n-l}}f_{11}\frac{u_1^2}{u^2}\right\}\nonumber\\
&&+\left[-\frac{3}{2}(-u_t)^{\frac{1}{n-l}}\sum_{i=2}^nf_{ii}\frac{u_{11i}^2}{u_{11}^2}
+(-u_t)^{\frac{1}{n-l}}\frac{1}{n-l}\sum_{i,j=1}^n\left(\frac{S_n}{S_l}\right)^{\frac{1}{n-l}-1}\frac{S_{n-2;ij}S_l-S_nS_{l-2;ij}}{S_l^2}\frac{u_{1ij}^2}{u_{11}}\right]\nonumber\\
&&+\frac{4}{(n-l)u}\nonumber\\
\label{16-1}&=:&I_1+I_2+\frac{4}{(n-l)u}
\end{eqnarray}
 where
$$I_1=(-u_t)^{\frac{1}{n-l}}\sum_{i=1}^nf_{ii}\left[\frac{4u_{ii}}{u}+\left(\frac{\phi^{\prime\prime}}{\phi}-\frac{3\phi^{\prime 2}}{\phi^2}\right)u_i^2u_{ii}^2+\frac{\phi^{\prime}}{\phi}u_{ii}^2\right]
-36(-u_t)^{\frac{1}{n-l}}f_{11}\frac{u_1^2}{u^2},$$
$$I_2=-\frac{3}{2}(-u_t)^{\frac{1}{n-l}}\sum_{i=2}^nf_{ii}\frac{u_{11i}^2}{u_{11}^2}
+(-u_t)^{\frac{1}{n-l}}\frac{1}{n-l}\sum_{i,j=1}^n\left(\frac{S_n}{S_l}\right)^{\frac{1}{n-l}-1}\frac{S_{n-2;ij}S_l-S_nS_{l-2;ij}}{S_l^2}\frac{u_{1ij}^2}{u_{11}},$$
and we used \eqref{5} and the inequalities
$$\left(\frac{\phi_1}{\phi}+\frac{4u_1}{u}\right)^2\leq 2\frac{\phi_1^2}{\phi^2}+\frac{32u_1^2}{u^2},$$
and
\begin{eqnarray*}\frac{1}{4}\left(\frac{\phi_i}{\phi}+\frac{u_{11i}}{u_{11}}\right)^2&\leq& \frac{1}{2}\frac{\phi_i^2}{\phi^2}+\frac{1}{2}\frac{u_{11i}^2}{u_{11}^2}\\
&\leq &2\frac{\phi_i^2}{\phi^2}+\frac{1}{2}\frac{u_{11i}^2}{u_{11}^2}.
\end{eqnarray*}

Due to \eqref{ele-1}, then
\begin{eqnarray}
\sum_{i=1}^{n}f_{ii}u_{ii}&=&\sum_{i=1}^{n}\mu^{\prime}\frac{S_{n-1;i}S_l-S_nS_{l-1;i}}{S_l^2}\lambda_i\nonumber\\
&=&\frac{\mu^{\prime}}{S_l^2}\left(\sum_{i=1}^{n}S_l S_{n-1;i}\lambda_i-\sum_{i=1}^{n}S_nS_{l-1;i}\lambda_i\right)\nonumber\\
&=&\frac{\mu^{\prime}}{S_l^2}(nS_nS_l-lS_nS_l)\nonumber\\
\label{e2}&=&(n-l)\mu^{\prime}\frac{S_n}{S_l}=\left(\frac{S_n}{S_l}\right)^{\frac{1}{n-l}}=f(D^2u)=(-u_t)^{-\frac{1}{n-l}}.
\end{eqnarray}
By \eqref{e2} and $\frac{\phi^{\prime\prime}}{\phi}-\frac{3\phi^{\prime 2}}{\phi^2}\geq 0$ and $\phi^{\prime}\geq 0,$
then $I_1$ can be estimated
\begin{eqnarray}
I_1&\geq& \frac{4}{u}+(-u_t)^{\frac{1}{n-l}}\theta_8f_{11}u_{11}^2-C_2(-u_t)^{\frac{1}{n-l}}\frac{f_{11}}{u^2},
\end{eqnarray}
where $\theta_8=\theta_8(\phi)$ and $C_2=C_2(||u||_{C^1(D)})$. We can assume that $\Phi(x_0,t_0)$ is large such that
\begin{equation}\label{16-3}u(x_0,t_0)^2u_{11}(x_0,t_0)^2\geq \frac{2C_2}{\theta_8},\end{equation}
as otherwise, $\Phi(x_0,t_0)\leq C,$ then \eqref{sec-est} is obvious.
So in the following, we always assume that \eqref{16-3} holds. Then
\begin{eqnarray}
I_1&\geq & (-u_t)^{\frac{1}{n-l}}\frac{1}{2}\theta_8f_{11}u_{11}^2+\frac{4}{u}.\label{16-2}
\end{eqnarray}

 $I_2$ can be estimated by \eqref{j-2.18-1},
\begin{eqnarray}
I_2&\geq &-\frac{3}{2}(-u_t)^{\frac{1}{n-l}}\frac{1}{n-l}\left(\frac{S_n}{S_l}\right)^{\frac{1}{n-l}-1}\sum_{i=2}^n\frac{S_{n-1;i}S_l-S_nS_{l-1;i}}{S_l^2}\frac{u_{11i}^2}{u_{11}^2}\nonumber\\
&&+2(-u_t)^{\frac{1}{n-l}}\frac{1}{n-l}\left(\frac{S_n}{S_l}\right)^{\frac{1}{n-l}-1}\sum_{i=2}^n\frac{S_{n-2;1i}S_l-S_nS_{l-2;1i}}{S_l^2}\frac{u_{11i}^2}{u_{11}}\nonumber\\
&=&2(-u_t)^{\frac{1}{n-l}}\frac{1}{n-l}\left(\frac{S_n}{S_l}\right)^{\frac{1}{n-l}-1}\nonumber\\
&&\sum_{i=2}^n\left(\frac{S_{n-2;1i}S_l-S_nS_{l-2;1i}}{S_l^2}-\frac{3}{4}\frac{S_{n-1;i}S_l-S_nS_{l-1;i}}{\lambda_1S_l^2}\right)\frac{u_{11i}^2}{\lambda_1}.\label{2.27*}
\end{eqnarray}
Then by \eqref{2.27*} and \eqref{ell-1},
\begin{equation}\label{18-1}I_2\geq 0.\end{equation}
By \eqref{16-2} and \eqref{18-1}, then \eqref{16-1} can be converted to
\begin{equation}\label{2.28-1} 0\geq (-u_t)^{\frac{1}{n-l}}\frac{1}{2}\theta_8f_{11}u_{11}^2+\frac{4}{u}\left(1+\frac{1}{n-l}\right).\end{equation}
Multiplying $(-u)^4\phi$ on both sides in \eqref{2.28-1}, we have
\begin{equation}\label{18-2}
0\geq \frac{1}{2}\theta_8(-u_t)^{\frac{1}{n-l}}f_{11}u_{11}^2(-u)^4\phi-C_3(-u)^3\phi,
\end{equation}
where $C_3=4(1+\frac{1}{n-l})$.

Due to \eqref{j-f11-3},
\begin{eqnarray}\label{f11-4}f_{11}u_{11}^2&\geq& \mu^{\prime}\frac{n-l}{n}\frac{\lambda_1S_{n-1}}{S_l}u_{11}\nonumber\\
&\geq &\mu^{\prime}\frac{n-l}{n}\frac{S_{n}}{S_l}u_{11}\nonumber\\
&=&\frac{1}{n}(-u_t)^{-\frac{1}{n-l}}u_{11}.
\end{eqnarray}

Consequently, by \eqref{18-2}, we have that
$$0\geq \frac{\theta_8}{2n}(-u)^4\phi u_{11}-C(-u)^3\phi.$$
Thus
\begin{eqnarray*}
\Phi=(-u)^4\phi u_{11}&\leq& \frac{2nC(-u)^3\phi}{\theta_8}\\
&\leq & C_0(n,l,||u||_{C^1(D)}).
\end{eqnarray*}
Then $(-u)^4|D^2u|$ can be estimated.
\end{proof}

\section{Proof of Theorem \ref{thm}}

In this section, we give the proof of Theorem \ref{thm} using the interior gradient estimate and Pogorelov type estimate.

Let $u\in C^{4,2}(\mathbb{R}^n\times(-\infty,0])$ be a parabolically convex solution to \eqref{hq1} which satisfies \eqref{ut} and \eqref{u0}. Without loss of generality, we assume that $u(x,0)\geq 0, u(0,0)=0$.

\begin{lemma}\label{lem3}
Let \eqref{u0} hold. Then there exists a constant $A_2>0$ depending only on $A,m_2$ and $n$ such that
\begin{equation}\label{hq-7}
u(x,0)\geq A_2|x|^2,\ \ x\in \mathbb{R}^n.
\end{equation}
If $l=n-1$, this holds for $A_2=\frac{1}{2m_2}$ without assumption \eqref{u0}.
\end{lemma}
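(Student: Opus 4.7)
The case $l=n-1$ is elementary: the identity $S_n(\lambda)/S_{n-1}(\lambda)=\big(\sum_i\lambda_i^{-1}\big)^{-1}$ turns the equation \eqref{hq1} into $\sum_i\lambda_i(D^2u(x,t))^{-1}=-u_t\le m_2$, which forces every eigenvalue of $D^2u$ to satisfy $\lambda_i\ge 1/m_2$. Thus $D^2u(x,t)\ge (1/m_2)I$ pointwise on $\mathbb{R}^n\times(-\infty,0]$. Since $u(x,0)\ge 0=u(0,0)$, the origin is the minimum of $u(\cdot,0)$, so $Du(0,0)=0$, and Taylor's formula with integral remainder yields
\[
u(x,0)=\int_0^1(1-s)\langle D^2u(sx,0)x,x\rangle\,ds\ge \frac{|x|^2}{2m_2},
\]
which gives $A_2=1/(2m_2)$ with no appeal to \eqref{u0}.

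For general $0\le l<n-1$ the PDE no longer controls individual eigenvalues, and my plan is a two--step reduction. In Step~1 I apply the Newton--Maclaurin inequality $(S_l/\binom{n}{l})^{1/l}\ge S_n^{1/n}$ to the identity $S_l/S_n=-u_t\le m_2$ coming from \eqref{hq1}, deriving the uniform lower bound
\[
\det D^2u(x,t)=S_n(D^2u)\ge c_0:=\Big(\binom{n}{l}/m_2\Big)^{n/(n-l)}>0
\]
on all of $\mathbb{R}^n\times(-\infty,0]$. In Step~2 I restrict to $t=0$ and study $w(x):=u(x,0)$, which is convex with $w(0)=0=\min w$, $w(x)\le A_1|x|^2+B$, and $\det D^2w\ge c_0$; the goal is to conclude $w(x)\ge A_2|x|^2$ with $A_2=A_2(A_1,m_2,n)>0$.

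For Step~2 I run a standard Monge--Amp\`ere level--set analysis. For $h>B$ the sublevel set $\Omega_h:=\{w\le h\}$ is a convex body containing $B_{r_h}$ with $r_h:=\sqrt{(h-B)/A_1}$. Comparing $w$ at a point $x\in\Omega_h$ with its tangent plane evaluated at $x+s\nabla w(x)/|\nabla w(x)|$ and using \eqref{u0} gives $|\nabla w(x)|\le C(A_1)\,\operatorname{diam}(\Omega_h)$, so $\nabla w(\Omega_h)$ is contained in a ball of radius $C\,\operatorname{diam}(\Omega_h)$; hence the Monge--Amp\`ere mass $\int_{\Omega_h}\det D^2w=|\nabla w(\Omega_h)|$ is at most $C\,\operatorname{diam}(\Omega_h)^n$. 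On the other hand $\det D^2w\ge c_0$ forces $\int_{\Omega_h}\det D^2w\ge c_0|\Omega_h|$, and John's lemma relates $|\Omega_h|$ to $\operatorname{diam}(\Omega_h)$ through the inscribed ball $B_{r_h}\subset\Omega_h$. Chasing these three inequalities forces $\operatorname{diam}(\Omega_h)\le C(A_1,m_2,n)\sqrt{h}$, which is precisely $w(x)\ge A_2|x|^2$.

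The main obstacle is Step~2: without the upper bound \eqref{u0} one cannot rule out $w$ being degenerate in some direction, because $\det D^2w\ge c_0$ permits an arbitrary spread of eigenvalues, one large compensating for one small. The upper bound is exactly what couples the directions and forces the John ellipsoid of $\Omega_h$ to be comparable to a ball of radius $\sim\sqrt{h}$; the combinatorial bookkeeping in the Aleksandrov--John--Newton--Maclaurin interplay, especially near the boundary values $h\downarrow B$ where the inscribed ball $B_{r_h}$ shrinks, is the technical heart of the lemma.
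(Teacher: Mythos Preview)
Your treatment of the case $l=n-1$ and your Step~1 (Newton--Maclaurin $\Rightarrow\det D^2u\ge c_0$) are correct and match the paper exactly.

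The gap is in Step~2. Trace your three inequalities: the gradient bound gives $|\nabla w(\Omega_h)|\le C\,D^n$ with $D:=\operatorname{diam}(\Omega_h)$; the Monge--Amp\`ere mass identity together with $\det D^2w\ge c_0$ gives $c_0|\Omega_h|\le|\nabla w(\Omega_h)|$. Combining these yields only
\[
c_0\,|\Omega_h|\le C\,D^n.
\]
Any lower bound on $|\Omega_h|$ that John's lemma and the inclusion $B_{r_h}\subset\Omega_h$ can supply---e.g.\ $|\Omega_h|\ge c\,r_h^{n-1}D$ via the convex hull of $B_{r_h}$ and a point at distance $\sim D$, or $|\Omega_h|\ge c\,a_1\cdots a_n$ with $a_i\ge r_h/n$---feeds back into this to give $r_h^{n-1}\lesssim D^{n-1}$, i.e.\ $D\gtrsim r_h$. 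That is the wrong direction: you recover only the trivial fact that the sublevel set is at least as wide as the inscribed ball, not the desired $D\le C\sqrt{h}$. The circularity is that your gradient bound already scales with $D$, so the mass inequality cannot beat $D$ down.

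What is actually needed is an upper bound on the product of the John semi--axes in terms of $h$ alone, and this is where the paper's argument differs from yours. On the inner John ellipsoid $\Gamma'$ with semi--axes $a_1\ge\cdots\ge a_n$ the paper compares $w-(L-B)$ against the explicit barrier
\[
v(x)=\frac{\tilde m^{1/n}}{2}(a_1\cdots a_n)^{2/n}\Bigl(\sum_i\frac{(x_i-x_i^0)^2}{a_i^2}-1\Bigr),
\]
which satisfies $\det D^2v=\tilde m$ and $v=0$ on $\partial\Gamma'$. The comparison principle then gives $(a_1\cdots a_n)^{2/n}\le 2(L-B)/\tilde m^{1/n}$. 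Only now does the inscribed ball enter: $B_\rho\subset\Omega_0$ forces $a_i\ge\rho/n$ for $i\ge2$, and the product bound then squeezes $a_1\le C_0\sqrt{L-B}$. Your gradient--mass chain does not produce this product bound; replace it by the comparison argument (or, equivalently, by the Aleksandrov estimate applied on the normalized John ellipsoid) and the rest of your outline goes through.
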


\begin{proof}
By \eqref{hq1} and the Newton-Maclaurin inequality
$$\left(\frac{S_l(\lambda)}{C_n^l}\right)^{\frac{1}{l}}\geq (S_n(\lambda))^{\frac{1}{n}},$$
where $C_n^l=\frac{n!}{l!(n-l)!}$, we have
$$S_n^{\frac{1}{l}-\frac{1}{n}}\geq \left(\frac{C_n^l}{-u_t}\right)^{\frac{1}{l}}.$$
So
$$S_n\geq \left(\frac{C_n^l}{m_2}\right)^{\frac{n}{n-l}}:=\tilde{m}.$$
Then
\begin{equation}\label{ma-1}
\det D^2u(x,0)\geq \tilde{m},\  \ x\in \mathbb{R}^n.
\end{equation}

For $L>B$, let $\Omega_0=\{x\in \mathbb{R}^n:u(x,0)<L-B\}$. Since $u(x,0)$ is strictly convex, then $\Omega_0$ is a nonempty convex open set. Let $\Gamma$ be the ellipsoid of smallest volume containing $\Omega_0$. By John's Lemma,
$$\Gamma'\equiv \frac{1}{n}\Gamma\subset \Omega_0\subset \Gamma.$$
Therefore $u(x,0)\leq L-B$ in $\o{\Gamma'}$.

Let $\Gamma'$ be defined by
$$\sum_{i=1}^n\frac{(x_i-x_i^0)^2}{a_i^2}\leq 1,\ a_1\geq \cdots\geq a_n>0,$$
where $x^0=(x_1^0,\dots,x_n^0)\in \mathbb{R}^n.$ Consider the function
$$v(x,0)=\frac{\tilde{m}^{\frac{1}{n}}}{2}(a_1\cdots a_n)^{\frac{2}{n}}\left(\sum_{i=1}^n\frac{(x_i-x_i^0)^2}{a_i^2}-1\right),\ \ x\in \mathbb{R}^n.$$
Then $\det D^2v(x,0)=\tilde{m}$ and $v=0$ on $\partial \Gamma'$. So, $u(x,0)-L+B\leq v(x,0)$ in $\o{\Gamma'}$ by the comparison principle. Particularly,
\begin{equation}\label{ma-2}L-B\geq L-B-u(x^0,0)\geq -v(x^0,0)=\frac{\tilde{m}^{\frac{1}{n}}}{2}(a_1\cdots a_n)^{\frac{2}{n}}.\end{equation}
By \eqref{u0}, we have $B_{\rho}(0)\subset \Omega_0$ where $\rho=\sqrt{(L-B)/A_1}$. Therefore, $\rho\leq n a_i$ for any $i$. Then from \eqref{ma-2}, we have that
\begin{equation}\label{ma-3}
a_1\leq \frac{(2(L-B))^{\frac{n}{2}}}{\tilde{m}^{\frac{1}{2}}a_2\cdots a_n}\leq \frac{(2(L-B))^{\frac{n}{2}}}{(\rho/n)^{n-1}}\leq C_0\sqrt{L-B},
\end{equation}
where $C_0$ depends only on $A_1,m_2,n$ and $l$. Notice that $0\in \Omega_0$, so $|x|\leq 2na_1$ for any $x\in \Omega_0$. We get from \eqref{ma-3} that
$$u(x,0)=L-B\geq \left(\frac{a_1}{C_0}\right)^2\geq A_2|x|^2,\ \ x\in \partial \Omega_0$$
for some positive constant $A_2$ depending only on $A_1,m_2,n,$ but not on $L$. Note that the level set $\Omega_0$ depends on the level $L-B$ which is arbitrary. This proves \eqref{hq-7}.

Finally, if $l=n-1$, then by \eqref{hq1}, the eigenvalues of $D^2u$, $\lambda_i(D^2u)\geq 1/m_2$ everywhere for $ i=1,\dots,n$, and thus $u(x)\geq \frac{1}{2m_2}|x|^2.$
\end{proof}

For $0<\alpha<1$ and a domain $\Omega\subset\mathbb{R}^n\times(-\infty,0],$ let 
$$[u]_{C^{\alpha}(\o{\Omega})}=\sup_{\substack {(x,t),(y,s)\in \Omega\\(x,t)\not=(y,s)}}\frac{|u(x,t)-u(y,s)|}{(|x-y|^2+|t-s|)^{\frac{\alpha}{2}}},$$
$$||u||_{C^{2+\alpha,1+\frac{\alpha}{2}}(\o{\Omega})}=\sum_{2i+j\leq 2}||D^i_tD^j_x u||_{C^{0}(\o{\Omega})}+[D^2u]_{C^{\alpha}(\o{\Omega})}+[u_t]_{C^{\alpha}(\o{\Omega})},$$
where $D^i_tD^j_x u$ denotes the $j-$th order derivative with respect to $x$ and $i-th$ order derivative with respect to $t$ of $u(x,t)$, $i,j\geq0$ are integers.

\begin{proof}[Proof of Theorem \ref{thm}]
Let
\begin{equation}\label{r0}
R_0^2=2B.
\end{equation}
Define
$$U_R=\{(x,t)\in \mathbb{R}^n\times(-\infty,0]|u(Rx,R^2t)<R^2\},$$
and
$$U_R(t)=\{x|(x,t)\in U_R\}.$$
Since $u$ is decreasing in $t$, then $U_R(t)$ is increasing in $t$, that is, for any $t_1\leq t_2$, we have that
$$U_{R}(t_1)\subset U_{R}(t_2).$$
So $U_R(t)$ is a bowl-shaped domain.

Since $u(x,0)\geq A_2|x|^2$, then
$$U_R(t)\subset U_R(0)\subset B_{\frac{1}{\sqrt{A_2}}}.$$
By \eqref{ut}, then for $(x,t)\in U_R$, we have
$$-m_1R^2t\leq A_2|Rx|^2-m_1R^2t\leq u(Rx,0)-m_1R^2t\leq u(Rx,R^2t)<R^2.$$
Thus
$$t\geq -\frac{1}{m_1}.$$
So
$$U_R\subset B_{1+\frac{1}{\sqrt{A_2}}}\times(-1-\frac{1}{m_1},0].$$
Define for $R>R_0$,
$$w(x,t)=\frac{u(Rx,R^2t)-R^2}{R^2}.$$
Then $w\in C^{4,2}(\mathbb{R}^n\times (-\infty,0])$ is parabolically convex,
$$w_t=u_t(Rx,R^2t), D^2w=D^2u(Rx,R^2t),$$
\begin{align}\label{5-1}m_1\leq -w_t\leq m_2\ \ \mbox{in}\ \ \mathbb{R}^n\times(-\infty,0],\end{align}
and $w$ satisfies
$$\begin{cases}
-w_t\dfrac{S_n(D^2u)}{S_l(D^2w)}=1\ \ &\mbox{in}\ \ U_R,\\
w=0\ \ &\mbox{on}\  \ \partial_p U_R.
\end{cases}$$
Moreover, by \eqref{r0} and Lemma \ref{lem3},
\begin{align}
\nonumber A_1|x|^2-\frac{1}{2}&\geq A_1|x|^2+\frac{B}{R_0^2}-1\geq\frac{A_1R^2|x|^2+B}{R^2}-1\\
\nonumber&\geq w(x,0)=\frac{u(Rx,0)-R^2}{R^2} \geq \frac{A_2R^2|x|^2}{R^2}-1=A_2|x|^2-1.
\end{align}
That is,
\begin{equation}\label{5-2}A_1|x|^2-\frac{1}{2}\geq  w(x,0)\geq A_2|x|^2-1.\end{equation}

In virtue of \eqref{5-1} and \eqref{5-2}, we know that in $B_{1+\frac{1}{\sqrt{A_2}}}\times(-1-\frac{1}{m_1},0],$
\begin{align}\nonumber-1\leq A_2|x|^2-1-m_1t\leq w(x,0)-m_1t\leq w(x,t)\\
\label{6-1}\leq w(x,0)-m_2t\leq A_1|x|^2-\frac{1}{2}-m_2t\leq C,
\end{align}
where $C=C(A_2,m_1,m_2)$ is a constant. By \eqref{6-1} and Theorem \ref{gradient}, we know that
$$|Dw|\leq C\ \ \mbox{in}\  \ U_R,$$
where $C=C(n,l,m_1)$.

For $\tau<0$, let
$$\Omega_\tau=\{(x,t)\in U_R|w(x,t)<\tau\}.$$
Choose $\tau=-\frac{1}{4}$, then from Theorem \ref{thm-ns}, we have that
$$\sup_{\Omega_{-\frac{1}{4}}}|w(x,t)|^4|D^2w|\leq C,$$
where $C$ depends only on $n,m_1,m_2.$
By \eqref{6-1}, we can get
$$|D^2w|\leq C\ \ \mbox{in}\ \ \Omega_{-\frac{1}{3}},$$
where $C=C(n,l,m_1,m_2)$.

Let $V=\{(x,t)\in\mathbb{R}^n\times(-\infty,0]||x|<\frac{1}{\sqrt{8A_1}},t>-\frac{1}{48m_2}\}.$ Due to \eqref{5-1} and \eqref{5-2}, we obtain that for any $(x,t)\in V$,
$$w(x,t)\leq w(x,0)-m_2t\leq A_1|x|^2-\frac{1}{2}-m_2t<-\frac{1}{3}.$$
So $V\subset \Omega_{-\frac{1}{3}}$. By Theorem 4.2 in \cite{nt}, we know that for some $0<\alpha<1$,
$$||w||_{C^{2+\alpha,1+\frac{\alpha}{2}}(\o{V'})}\leq C,$$
where $V'=\{(x,t)\in\mathbb{R}^n\times(-\infty,0]||x|<\frac{1}{\sqrt{10A_1}},t>-\frac{1}{50m_2}\}.$ Then
$$[D^2u]_{C^{\alpha}(\o{V''})}=R^{-\alpha}[D^2w]_{C^{\alpha}(\o{V})}\leq CR^{-\alpha},$$
$$[u_t]_{C^{\alpha}(\o{V''})}=R^{-\alpha}[w_t]_{C^{\alpha}(\o{V})}\leq CR^{-\alpha},$$
where $V''=\{(x,t)\in\mathbb{R}^n\times(-\infty,0]||x|<\frac{R}{\sqrt{10A_1}},t>-\frac{R^2}{50m_2}\}.$
Thus we have as $R\to+\infty$,
$$[D^2u]_{C^{\alpha}(\mathbb{R}^n\times(-\infty,0])}=0,$$
$$[u_t]_{C^{\alpha}(\mathbb{R}^n\times(-\infty,0])}=0.$$
As a result, $u$ has the desired form. We complete the proof.
\end{proof}

Similar to the proof of Theorem \ref{thm}, by Theorem \ref{thm-hqe} and the gradient estimate in \cite{chen}, we can also obtain the Bernstein theoremof elliptic Hessian quotient equation \eqref{hq-3} in $\mathbb{R}^n$. See also Theorem 1.2 in \cite{BCGJ}.

\begin{theorem}\label{ell-thm}
Let $u\in C^{4}(\mathbb{R}^n)$ be a strictly convex solution to
$$\frac{S_{n}(D^2u)}{S_{l}(D^2u)}=1\
\ \mbox{in}\ \ \mathbb{R}^n.$$
Assume that there exist positive constants $\tilde{A}_1, \tilde{B}$ such that for $x\in \mathbb{R}^n$,
\begin{equation*}\label{ell-u0}u(x)\leq \tilde{A}_1|x|^2+\tilde{B}.\end{equation*}
 Then $u$ must be a convex quadratic polynomial.
\end{theorem}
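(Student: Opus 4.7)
The plan is to mirror the strategy used for Theorem \ref{thm}, replacing parabolic scaling and estimates by their elliptic counterparts. The overall scheme I would follow is: first establish a quadratic lower bound for $u$, then rescale the solution on large balls so that the rescaled function lives on a fixed-size normalized convex domain, apply the Pogorelov estimate of Theorem \ref{thm-hqe} together with the interior gradient estimate from \cite{chen} to get uniform $C^2$ bounds on a fixed ball, boost these to $C^{2,\alpha}$ via Evans--Krylov, and finally scale back.

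First I would normalize so that $\min u = u(0) = 0$. The lower bound $u(x) \geq \tilde{A}_2 |x|^2$ follows from an elliptic version of Lemma \ref{lem3}: the Newton--Maclaurin inequality applied to $S_n/S_l = 1$ gives $\det D^2 u \geq (C_n^l)^{n/(n-l)} =: \tilde{m} > 0$, and then for each level $L > \tilde{B}$ one applies John's lemma to the normalized convex sublevel set $\Omega_L = \{u < L - \tilde{B}\}$ and compares $u$ with an explicit paraboloid of Monge--Amp\`ere constant $\tilde{m}$ built from the John ellipsoid of $\Omega_L$. The upper bound $u(x) \leq \tilde{A}_1 |x|^2 + \tilde{B}$ forces a definite ball inside $\Omega_L$, which in turn caps the longest semi-axis of the John ellipsoid by $C\sqrt{L - \tilde{B}}$, so $|x|^2 \leq C(L - \tilde{B})$ on $\partial \Omega_L$ uniformly in $L$, giving the desired quadratic minorant.

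Next I would rescale by $w_R(x) = R^{-2}\bigl(u(Rx) - R^2\bigr)$ on the bowl $U_R = \{x : u(Rx) < R^2\}$, for $R > R_0$ with $R_0^2 = 2\tilde{B}$. Since the quotient $S_n/S_l$ is invariant under $u \mapsto R^{-2} u(R\,\cdot)$, the function $w_R$ solves $S_n(D^2 w_R)/S_l(D^2 w_R) = 1$ on $U_R$ with $w_R = 0$ on $\partial U_R$ and $-1 \leq w_R \leq 0$. The two-sided quadratic bound traps $U_R$ between two fixed concentric balls, so it is a uniformly bounded convex domain. The interior gradient estimate of \cite{chen} then provides $|Dw_R| \leq C$ on the sublevel set $\{w_R \leq -1/4\}$, and Theorem \ref{thm-hqe} applied on this sublevel set (after the trivial shift so that the boundary datum vanishes) yields $|w_R|^4 |D^2 w_R| \leq C$ on $\{w_R \leq -1/3\}$ with constants independent of $R$.

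Finally, because $u(x) \leq \tilde{A}_1 |x|^2 + \tilde{B}$ forces a fixed ball $B_{r_0}$ around the origin to lie inside $\{w_R \leq -1/3\}$ for all $R$ sufficiently large, I obtain $\|w_R\|_{C^2(\overline{B_{r_0}})} \leq C$ uniformly in $R$. Since $F(D^2 u) = (S_n/S_l)^{1/(n-l)}$ is concave on the positive cone (this concavity is already used to derive the Pogorelov estimate), Evans--Krylov promotes these $C^2$ bounds to $\|w_R\|_{C^{2,\alpha}(\overline{B_{r_0/2}})} \leq C$ for some $\alpha \in (0,1)$ independent of $R$. Undoing the rescaling yields $[D^2 u]_{C^\alpha(B_{Rr_0/2})} \leq C R^{-\alpha}$; sending $R \to \infty$ forces $[D^2 u]_{C^\alpha(\mathbb{R}^n)} = 0$, so $D^2 u$ is constant and $u$ is a convex quadratic polynomial. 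The main obstacle is the Pogorelov estimate on the quotient equation, which has already been secured in Theorem \ref{thm-hqe}; the remaining subtlety is the bookkeeping in Step 1, ensuring that the bowls $U_R$ are genuinely normalized uniformly in $R$ for every $0 \leq l < n$, not only in the Monge--Amp\`ere case.
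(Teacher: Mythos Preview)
Your proposal is correct and follows exactly the route the paper indicates: the paper does not give a detailed proof of Theorem~\ref{ell-thm} but simply says it is obtained ``similar to the proof of Theorem~\ref{thm}, by Theorem~\ref{thm-hqe} and the gradient estimate in \cite{chen},'' and your write-up is precisely that elliptic transcription of the parabolic argument (quadratic lower bound via Newton--Maclaurin and John's lemma as in Lemma~\ref{lem3}, rescaling $w_R(x)=R^{-2}(u(Rx)-R^2)$, gradient bound from \cite{chen}, Pogorelov estimate from Theorem~\ref{thm-hqe}, Evans--Krylov in place of the parabolic Schauder step, and letting $R\to\infty$). One cosmetic point: you may as well apply Theorem~\ref{thm-hqe} directly to $w_R$ on $U_R$ (where $w_R=0$ on $\partial U_R$) rather than shifting to the sublevel set $\{w_R<-1/4\}$; either way the conclusion $|D^2w_R|\le C$ on a fixed interior ball is the same.
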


(L.M. Dai)  School of Mathematics and Information Science, Weifang
 University, Weifang, 261061, P. R. China

 {\it Email address}: lmdai@wfu.edu.cn
 \vspace{3mm}

(J.G. Bao)School of Mathematical Sciences, Beijing Normal University,
Laboratory of Mathematics and Complex Systems, Ministry of
Education, Beijing, 100875, P. R. China

{\it Email address}: jgbao@bnu.edu.cn
\vspace{3mm}

(B. Wang)School of Mathematics and Statistics, Beijing Institute of Technology, Beijing, 100081, P. R. China

{\it Email address}: wangbo89630@bit.edu.cn


\begin{thebibliography}{99}
\bibitem{BCGJ} J.G. Bao, J.Y. Chen, B. Guan, M. Ji, Liouville property and regularity of a Hessian quotient equation,
Amer. J. Math. 125 (2003) 301-316.

\bibitem{bqtw}  J.G. Bao, J.C. Qiang, Z.W. Tang, C. Wang. Interior estimates of derivatives and a Liouville type theorem for Parabolic k-Hessian equations. arXiv:2209.10776.





\bibitem {ca} L. Caffarelli, Topics in PDEs: The Monge-Amp\`{e}re equation. Graduate course, Courant Institute, New York University, 1995.

\bibitem {c} E. Calabi, Improper affine hyperspheres of convex type and a generalization of a theorem by K. J\"{o}rgens, Michigan Math. J. 5(1958) 105-126.



\bibitem{chen} C.Q. Chen. The interior gradient estimate of Hessian quotient equations. J. Differential Equations 259 (2015), no. 3, 1014--1023.

\bibitem{cw} K.S. Chou, X.J. Wang, A variational theory of the Hessian equation. Comm. Pure Appl. Math. 54 (2001), no. 9, 1029--1064.

\bibitem {cy} S.Y. Cheng, S.T. Yau, Complete affine hypersurfaces, I. The completeness of affine metrics,  Comm. Pure Appl. Math. 39(1986) 839-866.



\bibitem{GH1} C.E. Guti\'{e}rrez, Q.B. Huang. A generalization of a theorem by Calabi to the parabolic Monge-Amp\`{e}re equation. Indiana Univ. Math. J. 47 (1998), 1459--1480.

\bibitem{hsx}  Y. He, H.Y. Sheng, N. Xiang. A Pogorelov estimate and a Liouville type theorem to parabolic k-Hessian equations. arXiv:1907.07006.



\bibitem{jw}H.Y. Jian, X.J. Wang. Bernstein theorem and regularity for a class of Monge-Amp\`{e}re equations. J. Differential Geom. 93 (2013), no. 3, 431--469.



\bibitem {j} K. J\"{o}rgens, \"{U}ber die L\"{o}sungen der Differentialgleichung $rt-s\sp 2=1$, (German) Math. Ann. 127(1954) 130-134.

\bibitem{jx}J. Jost, Y.L. Xin, Some aspects of the global geometry of entire space-like submanifolds, Dedicated to Shiing-Shen Chern on his 90th birthday, Results Math. 40(2001) 233--245.

\bibitem{lly} D.S. Li, Z.S. Li, Y. Yuan. A Bernstein problem for special Lagrangian equations in exterior domains. Adv. Math. 361 (2020), 106927, 29 pp.






\bibitem{nt} S. Nakamori, K. Takimoto. A Bernstein type theorem for parabolic $k$-Hessian equations. Nonlinear Anal. 117 (2015), 211--220.

\bibitem {p} A. Pogorelov, On the improper convex affine hyperspheres, Geometriae Dedicata 1(1972) 33-46.

\bibitem{tw1} N.S. Trudinger, X.J. Wang, The Bernstein problem for affine maximal hypersurfaces, Invent. Math. 140(2000) 399--422.



\bibitem{xb} J.G. Xiong, J.G. Bao. On J\"{o}rgens, Calabi, and Pogorelov type theorem and isolated singularities of parabolic Monge-Amp\`{e}re equations. J. Differential Equations 250 (2011), 367--385.


\end{thebibliography}
\end{document}